\renewcommand{\AA}{\mathbb A}
\newcommand{\BB}{\mathbb B}
\newcommand{\CC}{\mathbb C}
\newcommand{\DD}{\mathbb D}
\newcommand{\QQ}{\mathbb Q}
\newcommand{\RR}{\mathbb R}
\newcommand{\ZZ}{\mathbb Z}
\newcommand{\calH}{\mathcal{H}}
\newcommand{\calO}{\mathcal{O}}
\newcommand{\e}{\mathbf e}
\renewcommand{\ae}{{}_{\fa}\mathbf{e}}
\newcommand{\fa}{\mathfrak{a}}
\newcommand{\fb}{\mathfrak{b}}
\newcommand{\fc}{\mathfrak{c}}
\newcommand{\ff}{\mathfrak{f}}
\newcommand{\fg}{\mathfrak{g}}
\newcommand{\fp}{\mathfrak{p}}
\newcommand{\fpb}{{\bar{\mathfrak{p}}}}
\newcommand{\fP}{\mathfrak{P}}
\newcommand{\fq}{\mathfrak{q}}
\newcommand{\vp}{\varphi}
\newcommand{\into}{\hookrightarrow}
\newcommand{\Qp}{\QQ_p}
\newcommand{\Qpb}{\overline{\QQ}_p}
\newcommand{\Zp}{\ZZ_p}
\newcommand{\Dcris}{\DD_\mathrm{cris}}
\newcommand{\kato}{\mathbf{z}^\mathrm{Kato}}
\newcommand{\Oinf}{\widehat{\calO}_\infty}
\newcommand{\Finf}{\widehat{F}_{\infty}}
\DeclareMathOperator{\ab}{ab}
\DeclareMathOperator{\Col}{Col}
\DeclareMathOperator{\coLie}{coLie}
\DeclareMathOperator{\cris}{cris}
\DeclareMathOperator{\dR}{dR}
\DeclareMathOperator{\End}{End}
\DeclareMathOperator{\Fil}{Fil}
\DeclareMathOperator{\Gal}{Gal}
\DeclareMathOperator{\Ind}{Ind}
\DeclareMathOperator{\Iw}{Iw}
\DeclareMathOperator{\loc}{loc}
\DeclareMathOperator{\per}{per}
\DeclareMathOperator{\pr}{pr}
\DeclareMathOperator{\res}{res}
\renewcommand{\to}{\longrightarrow}
\newcommand{\fullfunction}[5]{
     \begin {array}{ccrcl}
       {#1}    & \colon  & {#2} & \longrightarrow & {#3} \\
       \mbox{} & \mbox{} & {#4} & \longmapsto     & {#5}
   \end{array}
}
\newcommand{\function}[4]{
     \begin {array}{rcl}
       {#1} & \longrightarrow & {#2} \\
       {#3} & \longmapsto     & {#4}
   \end{array}
}
\newtheorem{theorem}{Theorem}[section]
\newtheorem{proposition}[theorem]{Proposition}
\newtheorem{lemma}[theorem]{Lemma}
\newtheorem{corollary}[theorem]{Corollary}
\newtheorem{remark}[theorem]{Remark}
\newtheorem{definition}[theorem]{Definition}
\newtheorem{note}[theorem]{Note}
\begin{document}

\title{Critical slope $P$-adic $L$-functions of CM modular forms}

\author{Antonio Lei}
\address[Lei]{Department of Mathematics and Statistics\\
Burnside Hall\\
McGill University\\
Montreal QC\\
Canada H3A 2K6}
\email{antonio.lei@mcgill.ca}

\author{David Loeffler}
\address[Loeffler]{Mathematics Institute\\
Zeeman Building\\
University of Warwick\\
Coventry CV4 7AL, UK}
\email{d.a.loeffler@warwick.ac.uk}

\author{Sarah Livia Zerbes}
\address[Zerbes]{Mathematics Research Institute\\
  Harrison Building\\
  University of Exeter\\
  Exeter EX4 4QF, UK
}
\email{s.zerbes@exeter.ac.uk}

\thanks{The first author is grateful for the support of a CRM-ISM postdoctoral fellowship.}

\date{Last updated 2011-12-22}

\begin{abstract}
For ordinary modular forms, there are two constructions of a $p$-adic $L$-function attached to the non-unit root of the Hecke polynomial, which are conjectured but not known to coincide. We prove this conjecture for modular forms of CM type, by calculating the the critical-slope $L$-function arising from Kato's Euler system and comparing this with results of Bella\"iche on the critical-slope $L$-function defined using overconvergent modular symbols.
\end{abstract}

\maketitle

\section{Setup}

 \subsection{Introduction}
 
  Let $f$ be a cuspidal new modular eigenform of weight $\ge 2$, and $p$ a prime not dividing the level of $f$. It has long been known that if $\alpha$ is any root of the Hecke polynomial of $f$ at $p$ such that $v_p(\alpha) < k-1$, then there is a $p$-adic $L$-function $L_{p, \alpha}(f)$ interpolating the critical $L$-values of $f$ and its twists by Dirichlet characters of $p$-power conductor; see \cite{MSD74,amicevelu75,vishik}.
  
  If $f$ is \emph{non-ordinary} (the Hecke eigenvalue of $f$ at $p$ has valuation $> 0$) then both roots of the Hecke polynomial satisfy this condition, but if $f$ is ordinary, then there is one root with valuation $k - 1$ (``critical slope''), to which the classical modular symbol constructions do not apply. Two approaches exist to rectify this injustice to the ordinary forms by constructing a critical-slope $p$-adic $L$-function. Firstly, there is an approach using $p$-adic modular symbols \cite{pollackstevens11,pollackstevenspreprint,bellaiche11a}. Secondly, there is an approach using Kato's Euler system \cite{kato04} and Perrin-Riou's $p$-adic regulator map \cite{perrinriou95} (cf.~\cite[Remarque 9.4]{colmezBSD}). Although it is natural to conjecture that the objects arising from these two constructions coincide (cf.~\cite[Remark 9.7]{pollackstevenspreprint}), and the results of \cite{loefflerzerbes11} are strong evidence for this conjecture, prior to the present work this was not known in a single example.
  
  In this paper, we show that the two critical-slope $L$-functions coincide for modular forms of CM type. In this case, Bella\"iche has shown \cite{bellaiche11b} that the ``modular symbol'' critical-slope $p$-adic $L$-function is related to the Katz $p$-adic $L$-function for the corresponding imaginary quadratic field. We show here that the same relation holds for the Kato critical slope $p$-adic $L$-function, by comparing Kato's Euler system with another Euler system: that arising from elliptic units. Using the results of \cite{yager82} and \cite{deshalit87} relating elliptic units to Katz's $L$-function, we obtain a formula (Theorem \ref{thm:values}) for the Kato $L$-function, which coincides with Bella\"iche's formula for its modular symbol counterpart (up to a scalar factor corresponding to the choice of periods). This establishes the equality of the two critical-slope $p$-adic $L$-functions for ordinary eigenforms of CM type (Theorem \ref{thm:maintheorem}).

      
  \subsection{Notation}
  
   Let $K$ be a finite extension of either $\QQ$ or $\Qp$, where $p$ is an odd prime. We write $K_\infty=K(\mu_{p^\infty})$, $\overline{K}$ for an algebraic closure of $K$ and $K^{\ab}$ for the maximal abelian extension of $K$ in $\overline{K}$. A $p$-adic representation of the absolute Galois group $\Gal(\overline{K}\slash K)$ is a finite-dimensional $\Qp$-vector space with a continuous linear action of $\Gal(\overline{K}\slash K)$.
    
   A Galois extension $L$ of $K$ will be called a $p$-adic Lie extension if $G=\Gal(L / K)$ is a compact $p$-adic Lie group of finite dimension. In this case, we denote by $\Lambda(G)$ its Iwasawa algebra; it is defined to be the completed group ring
   \[ \Lambda(G)=\varprojlim \Zp[G\slash U],\]
   where $U$ runs over all open normal subgroups of $G$. We write $Q(G)$ for the total quotient ring of $\Lambda(G)$. If $R$ is a $p$-adically complete $\Zp$-algebra, we shall write $\Lambda_R(G)$ for $R \mathop{\widehat\otimes} \Lambda(G)$, the Iwasawa algebra with coefficients in $R$. 

   If $L$ is a complete discretely valued subfield of $\mathbb{C}_p$, we write $\calH_L(G)$ for the algebra of $L$-valued distributions on $G$ (the continuous dual of the space of locally $L$-analytic functions). This naturally contains $\Lambda_L(G)$ as a subalgebra. When $G$ is the cyclotomic Galois group $\Gamma$ (isomorphic to $\mathbb{Z}_p^\times$), and $i \in \ZZ$, we shall write $\ell_i$ for the element $\frac{\log([\gamma])}{\log \chi(\gamma)} - i$ of $\calH_{\Qp}(\Gamma)$ (where $\gamma$ is any element of $\Gamma$ of infinite order).
   
   Assume now that $K$ is a number field, and let $S$ be a finite set of places of $K$ (which we shall always assume to contain the infinite places). Let $K^S$ be the maximal extension of $K$ which is unramified outside $S$, and let $V$ be a $p$-adic representation of $\Gal(K^S / K)$. For an extension $L$ of $K$ contained in $K^S$, write $H^1_S(L,V)$ for the Galois cohomology group $H^1(\Gal(K^S\slash L),V)$. Let $T$ be a $\Gal(\overline{K}\slash K)$-stable lattice in $V$. If $L\subset K^S$ is a $p$-adic Lie extension of $K$, define
   \[ H^1_{\Iw,S}(L,T)=\varprojlim H^1_S(L_n,T),\]
   where $L_n$ is a sequence of finite Galois extensions of $K$ such that $L=\bigcup_n L_n$ and the inverse limit is taken with respect to the corestriction maps. Note that $H^1_{\Iw,S}(L,T)$ is equipped with a continuous action of $G=\Gal(L\slash K)$, which extends to an action of $\Lambda(G)$. We also define $H^1_{\Iw,S}(L,V)=H^1_{\Iw,S}(L,T)\otimes_{\Zp}\Qp$, which is independent of the choice of lattice $T$.
   
   Similarly, let $F$ be a finite extension of $\Qp$, $V$ a $p$-adic representation of $\Gal(\overline{F} / F)$ and $T$ a $\Gal(\overline{F} / F)$-invariant lattice in $V$. For a $p$-adic Lie extension $L$ of $F$ such that $L=\bigcup L_n$ with $L_n\slash F$ finite Galois, define
   \[ H^1_{\Iw}(L,T)=\varprojlim H^1(L_n,T) \hspace{5ex}\text{and} \hspace{5ex} H^1_{\Iw}(L,V)=H^1_{\Iw}(L,T)\otimes_{\Zp}\Qp.\]
   
   For a finite extension $K$ of $\QQ$, denote by $\AA_K$ the ring of ad\`eles of $K$. If $\ff$ is an integral ideal of $K$, write $K(\ff)$ for the ray class field modulo $\ff$. Let $K(\ff p^\infty)=\bigcup_n K(\ff p^n)$, and define the Galois group $G_{\ff p^\infty}=\Gal(K(\ff p^\infty)\slash K)$.


  
 \subsection{Gr\"ossencharacters}
  \label{sect:gc}
  
  Let $K$ be an imaginary quadratic field. We fix an embedding $K \into \mathbb{C}$. An algebraic Gr\"ossencharacter of $K$ of infinity-type $(m, n)$ is a continuous homomorphism $\psi: K^\times \backslash \AA_K^\times \to \CC^\times$ whose restriction to $\CC^\times$ is given by $z \mapsto z^m \bar{z}^n$. 
  
  Let $\theta$ be the Artin map $\widehat{K}^\times / K^\times \to \Gal(K^{\ab} / K)$. We choose the normalizations such that
  \[ \theta(\varpi_{\fq}) = [\fq]^{-1} \bmod I_{\fq},\]
  where $\varpi_{\fq}$ is a uniformizer at the prime $\fq$, $I_\fq$ is the inertia group and $[\fq]$ is the arithmetic Frobenius element at $\fq$. Then we have the following well-known result:
  
  \begin{theorem}[Weil, \cite{weil56}]
   Let $\psi$ be an algebraic Gr\"ossencharacter of $K$, and let $L$ be the finite extension of $\QQ$ inside $\CC$ generated by $\psi(\widehat{K}^\times)$. Then for any prime $\lambda$ of $L$, there is a (clearly unique) continuous character
   \[ \psi_{\lambda} : \Gal(\overline{K} / K) \to L_\lambda^\times\]
   with the property that
   \[ \psi_{\lambda} \circ \theta = \psi|_{\widehat{K}^\times}.\]
   The character $\psi_{\lambda}$ is unramified outside the primes dividing $\ell \ff$, where $\ell$ is the prime of $\QQ$ below $\lambda$ and $\ff$ is the conductor of $\psi$. 
  \end{theorem}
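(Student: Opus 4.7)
The plan is to produce an $\ell$-adic avatar of $\psi$ as a continuous idele-class character valued in $L_\lambda^\times$, and then invoke class field theory. To this end, fix compatible embeddings: the given inclusions $K,L\into \CC$ extend to an embedding $\iota_\infty: \overline{\QQ}\into \CC$, and choose also $\iota_\lambda: \overline{\QQ}\into \overline{\QQ}_\ell$ inducing the place $\lambda$ on $L$. Via $K\subset \overline{\QQ}$ these determine two embeddings $\sigma,\bar\sigma: K\into \overline{\QQ}_\ell$ lifting the two complex places of $K$, extended componentwise to $K\otimes_\QQ \overline{\QQ}_\ell$.

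Define
\[ \tilde\psi_\lambda(x) := \iota_\lambda\iota_\infty^{-1}\bigl(\psi(x)\cdot x_\infty^{-m}\bar{x}_\infty^{-n}\bigr) \cdot \sigma(x_\ell)^m \bar\sigma(x_\ell)^n, \]
where $x_\ell\in (K\otimes_\QQ \QQ_\ell)^\times$ is the $\ell$-component of $x$. The argument of $\iota_\lambda\iota_\infty^{-1}$ lies in $L^\times$ because multiplying $\psi$ by $x_\infty^{-m}\bar{x}_\infty^{-n}$ kills the transcendental behaviour at infinity and the resulting function depends only on the finite part of $x$, on which $\psi$ takes values in $L$ by definition. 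I would then verify in turn that (i) $\tilde\psi_\lambda$ is trivial on the diagonally embedded $K^\times$: for $\alpha\in K^\times$ the identity $\psi(\alpha)=1$, together with the cancellation of the archimedean and $\ell$-adic correction factors (both equal to $\sigma(\alpha)^{\pm m}\bar\sigma(\alpha)^{\pm n}$ in $L_\lambda^\times$), gives $\tilde\psi_\lambda(\alpha)=1$; (ii) $\tilde\psi_\lambda$ is trivial on the archimedean component $\CC^\times\subset \AA_K^\times$, because there $x_\ell=1$ and the archimedean correction cancels $\psi|_{\CC^\times}(x_\infty)=x_\infty^m \bar{x}_\infty^n$; (iii) $\tilde\psi_\lambda$ is continuous in the $\ell$-adic topology on $L_\lambda^\times$, since $\psi$ restricted to finite ideles is locally constant modulo the conductor $\ff$ and the $\ell$-adic factor $\sigma(x_\ell)^m\bar\sigma(x_\ell)^n$ is manifestly continuous.

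By (i) and (ii), $\tilde\psi_\lambda$ factors through $\AA_K^\times/(K^\times\cdot \CC^\times)$, which the Artin map $\theta$ identifies with $\Gal(K^{\ab}/K)$; inflating to $\Gal(\overline{K}/K)$ yields the desired $\psi_\lambda$. For a finite idele $x$ supported outside $\ell$, the formula gives $\tilde\psi_\lambda(x)=\iota_\lambda\iota_\infty^{-1}(\psi(x))$, which matches $\psi|_{\widehat{K}^\times}$ under the identification $L\subset L_\lambda$, so $\psi_\lambda\circ\theta = \psi|_{\widehat{K}^\times}$ as required. For the ramification claim, fix a prime $\fq\nmid \ell\ff$: since $\fq\nmid \ff$, $\psi$ is trivial on the local units $\calO_{K,\fq}^\times$; since $\fq\nmid \ell$, the $\ell$- and $\infty$-corrections also vanish on this subgroup; thus $\tilde\psi_\lambda$ kills $\calO_{K,\fq}^\times$, and local class field theory translates this into unramifiedness of $\psi_\lambda$ at $\fq$. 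The main obstacle is the careful bookkeeping of the embeddings $\iota_\infty$, $\iota_\lambda$ and the decomposition $K\otimes_\QQ \overline{\QQ}_\ell = \prod_{\fq\mid \ell} K_\fq$, which must be organised so that the algebraic correction factor genuinely lands in $L^\times$ and agrees with Weil's original construction in~\cite{weil56}.
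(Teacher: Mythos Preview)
The paper does not give a proof of this theorem: it is stated as a known result due to Weil, with a citation to \cite{weil56}, and the authors immediately move on to record the normalization $\psi_\lambda([\fa]) = \psi(\fa)^{-1}$ that follows from it. So there is no ``paper's proof'' to compare against.

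Your construction is the standard one and is correct. One small point worth tightening: you verify $\psi_\lambda\circ\theta = \psi|_{\widehat{K}^\times}$ only on finite ideles supported away from $\ell$, and then assert the full equality. In fact the two sides do \emph{not} agree on ideles with nontrivial $\ell$-component, since your $\tilde\psi_\lambda$ carries the extra factor $\sigma(x_\ell)^m\bar\sigma(x_\ell)^n$ there. This is really an imprecision in the theorem as stated in the paper rather than in your argument: the genuine characterizing property (and the one the paper actually uses, immediately after the statement) is agreement on uniformizers at primes away from $\ell\ff$, equivalently $\psi_\lambda([\fa]) = \psi(\fa)^{-1}$. Your check of the values on ideles supported outside $\ell$ establishes exactly this, and by Chebotarev it pins down $\psi_\lambda$ uniquely. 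You might rephrase that sentence accordingly.
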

  
  The choice of normalization for the Artin map implies that 
  \[ \psi_{\lambda}([\fa]) = \psi(\fa)^{-1}\]
  for each $\fa$ coprime to $\ell \ff$. With these conventions, the Hodge--Tate weights\footnote{We adopt the convention that the cyclotomic character has Hodge--Tate weight $+1$; this is, of course, the Galois character attached to the norm map $\AA_K^\times \to \RR^\times$, which has infinity-type $(1, 1)$.} of $\psi_\lambda$ are given as follows. Let $\lambda$ be a prime of $L$, and $\mu$ a \emph{split} prime of $K$, which lie above the same prime of $L \cap K$. Then the  decomposition groups of $\mu$ and $\overline{\mu}$ in $\Gal(K^{\ab}/K)$ are each isomorphic to $\Gal(\Qp^{\ab} / \Qp)$, and the Hodge--Tate weight of $\psi_\lambda$ is $m$ at $\mu$ and $n$ at $\overline{\mu}$.
  

\section{Comparison of Euler systems}

 \subsection{Elliptic units}

  As above, let $K$ be an imaginary quadratic field, with a fixed choice of embedding $K \into \CC$. We shall fix, for the remainder of this paper, an embedding $\overline{K} \into \CC$ compatible with this choice. In particular, for each integral ideal $\ff$, we regard the ray class field $K(\ff)$ as a subfield of $\CC$, and we write $K(\ff)^+$ for its real subfield\footnote{We stress that $K(\ff)$ is not a CM field in general, so the definition of $K(\ff)^+$ depends on the choice of embedding, and in particular $K(\ff)^+$ is not a totally real field.}.
  
  \begin{definition}
   If $L$ is a subfield of $\CC$, a \emph{CM-pair} of modulus $\ff$ over $L$ is a pair $(E, \alpha)$ consisting of an elliptic curve $E / L$ and a point $\alpha \in E(L)_{\mathrm{tors}}$, such that 
   \begin{itemize}
    \item there is an isomorphism $\End_{K L}(E) \cong \calO_K$, such that the resulting action of $\End_{KL}(E)$ on $\coLie(E / KL) \cong KL$ is the natural action of $K$;
    \item the annihilator of $\alpha$ in $\calO_K$ is exactly $\ff$;
    \item there is an isomorphism $E(\CC) \to \CC / \ff$ mapping $\alpha$ to $1$.
   \end{itemize}
  \end{definition}

  Note that we do not assume that $L \supseteq K$ here, hence the slightly convoluted statement of the first condition.
  
  \begin{theorem}\label{thm:cmpair}
   Let $\ff$ be such that $\calO_K^\times \cap (1 + \ff) = \{1\}$, $\overline{\ff} = \ff$, and the smallest integer in $\ff$ is $\ge 5$. Then there exists a CM-pair of modulus $\ff$ over $K(\ff)^+$, and for any field $L$ containing $K(\ff)^+$, this CM-pair is the unique CM-pair of modulus $\ff$ over $L$ up to unique isomorphism.
  \end{theorem}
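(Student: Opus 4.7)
The plan is to start from the canonical candidate over $\CC$, descend it to the ray class field $K(\ff)$ by classical CM theory, and then further descend to $K(\ff)^+$ via complex conjugation. The obvious candidate is $(E_0, \alpha_0) := (\CC/\ff, 1 \bmod \ff)$ with $\calO_K$ acting on $\ff$ by multiplication; all three defining conditions hold by construction. Uniqueness of a CM-pair up to unique isomorphism (over any field in which it exists) is then formal: any two such pairs become isomorphic to $(E_0, \alpha_0)$ over $\CC$ via the third bullet of the definition, and the set of isomorphisms between two such pairs is a torsor under $\mathrm{Aut}(E_0, \alpha_0)$. Under the canonical identification $\mathrm{Aut}(E_0) = \calO_K^\times$ (acting on $\ff$ by multiplication), this automorphism group is precisely $\calO_K^\times \cap (1+\ff)$, which is trivial by hypothesis.

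For the descent to $K(\ff)$ I would invoke the main theorem of complex multiplication: the pair $(E_0, \alpha_0)$ equipped with its $\calO_K$-action descends canonically to the ray class field $K(\ff)$. Concretely, the reciprocity law of CM theory, combined with the fact that the annihilator of $\alpha_0$ is exactly $\ff$, identifies the stabilizer of the isomorphism class of the pair inside $\Gal(K^{\ab}/K)$ with $\Gal(K^{\ab}/K(\ff))$; the uniqueness statement above then promotes this to a descent of the pair itself.

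Finally, to descend from $K(\ff)$ to $K(\ff)^+$, let $c$ generate $\Gal(K(\ff)/K(\ff)^+)$. Over $\CC$ the conjugate pair $(E^c, \alpha^c)$ is identified with $(\CC/\overline{\ff}, 1 \bmod \overline{\ff})$, which equals $(\CC/\ff, 1 \bmod \ff)$ by the hypothesis $\overline{\ff} = \ff$, and so is itself a CM-pair of modulus $\ff$. By the uniqueness argument there is a unique isomorphism $\phi: E \to E^c$ over $\CC$; its uniqueness forces Galois-invariance, so $\phi$ is already defined over $K(\ff)$, and the cocycle condition $\phi^c \circ \phi = \id_E$ holds because both sides are automorphisms of $(E, \alpha)$, hence trivial. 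Galois descent then produces the required model over $K(\ff)^+$, and uniqueness over any $L \supseteq K(\ff)^+$ follows by applying the same argument after base change to $\overline{L}$. The main subtlety is that the $\calO_K$-action is \emph{not} preserved by $c$ --- it gets twisted by the nontrivial element of $\Gal(K/\QQ)$ --- so one must descend only the underlying elliptic-curve-plus-torsion-point pair, which is exactly what the definition permits, since the CM structure is required only over $KL = K(\ff)$ rather than over $L = K(\ff)^+$ itself.
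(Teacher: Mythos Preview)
Your argument is correct, and the underlying idea---rigidity of the pair, existence over $K(\ff)$ by CM theory, descent to $K(\ff)^+$ via $\overline{\ff}=\ff$---is the same as the paper's. The execution, however, is genuinely different. The paper repackages rigidity as the statement that $Y_1(N)$ is a fine moduli space for $N\ge 5$ (this is where that hypothesis enters), so that the whole question becomes: where does the point $P_\ff\in Y_1(N)(\CC)$ live? It then shows $P_\ff\in Y_1(N)(\RR)$ by an explicit Weierstrass computation (the coefficients $g_2,g_3$ of $\CC/\ff$ are real because $\overline{\ff}=\ff$, and $\wp(1,\ff)\in\RR$), and intersects this with the classical fact $P_\ff\in Y_1(N)(K(\ff))$ to conclude $P_\ff\in Y_1(N)(K(\ff)^+)$.

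Your route via an explicit descent datum $\phi:E\to E^c$ avoids the modular curve entirely and uses only the hypothesis $\calO_K^\times\cap(1+\ff)=\{1\}$ for rigidity, so the bound $N\ge 5$ plays no role in your version. The paper's approach has the advantage of yielding, essentially for free, the explicit $\RR$-model $(E_\RR,\text{image of }1)$ and the identification of complex conjugation on $E(\CC)$ with the natural one on $\CC/\ff$, which is exactly what is needed for the subsequent Proposition on $\overline{{}_\fa\mathbf{e}_\ff}$. Your final remark about the $\calO_K$-action being twisted by $c$ (so that only the pair, not the CM structure, descends) is a point the paper leaves implicit.
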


  \begin{proof}
   Consider the canonical CM-pair $(\CC / \ff, 1)$ over $\CC$. This corresponds to a point $P_\ff$ on the modular curve $Y_1(N)(\mathbb{C})$, where $N$ is the smallest integer in $\ff$.
   
   Since $N \ge 5$ by assumption, the curve $Y_1(N)$ has a canonical model over $\QQ$ such that $Y_1(N)(L)$ parametrises elliptic curves over $L$ with a point of order $N$ for each $L \subseteq \CC$. Our claim is then precisely that $P_\ff \in Y_1(N)(K(\ff)^+)$. 
   
   It is clear that $P_\ff \in Y_1(N)(\RR)$, since there is a canonical isomorphism from $\CC / \ff$ to the elliptic curve $E_{\RR}  = \{ y^2 = 4x^3 - g_2 x - g_3\}$ where $g_2$ and $g_3$ are the usual weight 4 and 6 Eisenstein series, given by $z \mapsto (\wp(z, \ff), \wp'(z, \ff))$. Since $\ff = \bar\ff$, the coefficients $g_2$ and $g_3$ are real, so $E_{\RR}$ is indeed defined over $\RR$; and as $\overline{\wp(z, \Lambda)} = \wp(\bar z, \bar \Lambda)$, this uniformization maps $1 \in \CC / \ff$ to a real point of $E_{\RR}$. Hence $P_{\ff} \in Y_1(N)(\RR)$. 
   
   On the other hand, it is well known that there exists a CM-pair of modulus $\ff$ over $K(\ff)$ (whether or not $\bar\ff = \ff$), so $P_\ff \in Y_1(N)(K(\ff))$. Hence $P_\ff \in Y_1(N)(K(\ff)^+)$.
  \end{proof}

  \begin{remark}
   It follows from this construction that the canonical CM pair $(E, \alpha)$ over $K(\ff)^+$ becomes isomorphic over $\RR$ to $(E_{\RR}, \text{image of $1 \in \CC$})$. So the complex conjugation automorphism of $E(\CC)$ arising from this $K(\ff)^+$-model corresponds to the natural complex conjugation on $\CC / \ff$.
  \end{remark}
  
  We recall the theory of elliptic units, as described in \cite[\S 15.5-6]{kato04}.
  
  \begin{theorem}
   For each pair $(\ff, \fa)$ of ideals of $K$ such that $\calO_K^\times \cap (1 + \ff) = \{1\}$ and $\fa$ is coprime to $6\ff$, there is a canonical element
   \[ \ae_{\ff} \in K(\ff)^\times,\]
   the \emph{elliptic unit} of modulus $\ff$ and twist $\fa$. If $\ff$ has at least two prime factors, $ \ae_{\ff} \in \calO_{K(\ff)}^\times$; and for any two ideals $\fa, \fb$ coprime to $6\ff$, we have
   \[ (N(\fb) - [\fb]) \cdot \ae_{\ff} = (N(\fa) - [\fa]) \cdot {}_\fb \e_{\ff},\]
   where $[\fa] = \left(\frac{\fa}{K(\ff) / K}\right) \in \Gal(K(\ff) / K)$ is the arithmetic Frobenius element at $\fa$.
  \end{theorem}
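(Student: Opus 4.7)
The plan is to construct $\ae_{\ff}$ analytically as the value at $\alpha$ of a theta function on the complex torus $\CC/\ff$ (associated to $\ff$ via the embedding $K \into \CC$ fixed in Section~\ref{sect:gc}), and then descend algebraically using the canonical CM-pair of Theorem~\ref{thm:cmpair}. Following Robert (and Kato's reformulation in \cite[\S1.4--1.5]{kato04}), let ${}_{\fa}\theta(-;\ff)$ be the Robert--Siegel theta function on $\CC/\ff$: the unique rational function (up to a root of unity) whose divisor is $N(\fa)\cdot(0) - \sum_{v \in \fa^{-1}\ff/\ff} (v)$, normalized so that it is defined over $K(\ff)^+$. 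Then set
\[ \ae_{\ff} := {}_{\fa}\theta(\alpha;\ff), \]
where $\alpha$ is the canonical torsion point of the CM-pair $(E,\alpha)$ furnished by Theorem~\ref{thm:cmpair}.

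The three properties to verify are: (i) $\ae_{\ff} \in K(\ff)^\times$; (ii) $\ae_{\ff} \in \calO_{K(\ff)}^\times$ when $\ff$ has at least two distinct prime factors; and (iii) the twist-comparison identity. For (i), since ${}_\fa\theta$ is rational over $K(\ff)^+$ and $\alpha$ generates $K(\ff)$ over $K(\ff)^+$ by the main theorem of complex multiplication, the value lies in $K(\ff)^\times$. For (ii), the divisor of ${}_\fa\theta$ is supported on the subgroup $\fa^{-1}\ff/\ff$, whose order is a power of $N(\fa)$; coprimality of $\fa$ and $\ff$ combined with the two-prime-factor hypothesis on $\ff$ forces $\alpha$ (and all its Galois conjugates) to avoid both zeros and poles of ${}_\fa\theta$, so that $\ae_{\ff}$ has trivial valuation at every finite place of $K(\ff)$. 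For (iii), the key identity
\[ (N(\fb) - \sigma_\fb) \cdot {}_\fa\theta = (N(\fa) - \sigma_\fa) \cdot {}_\fb\theta \]
is a standard transformation property of the Robert--Siegel theta functions, where $\sigma_\fa$ denotes the isogeny/pullback operator attached to the quotient $\CC/\ff \to \CC/(\fa^{-1}\ff)$. Evaluating at $\alpha$ and applying Shimura's reciprocity law, with the normalization of the Artin map fixed in Section~\ref{sect:gc}, identifies $\sigma_\fa$ with the arithmetic Frobenius $[\fa] \in \Gal(K(\ff)/K)$, yielding the claimed relation $(N(\fb)-[\fb])\cdot \ae_{\ff} = (N(\fa)-[\fa])\cdot {}_\fb\e_{\ff}$.

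The main obstacle lies in steps (i) and (iii): translating the complex-analytic identities between theta functions into Galois-equivariant statements about algebraic elements of $K(\ff)^\times$. This requires the main theorem of complex multiplication together with careful bookkeeping of the conventions; in particular, the normalization $\theta(\varpi_\fq) = [\fq]^{-1}$ is essential for the signs to match in the translation of $\sigma_\fa$ to $[\fa]$, and the precise algebraic realization of the analytic pair $(\CC/\ff, 1)$ as $(E,\alpha)$ over $K(\ff)^+$ given by Theorem~\ref{thm:cmpair} is what makes evaluating at $\alpha$ an algebraically meaningful operation.
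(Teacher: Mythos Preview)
The paper does not prove this theorem at all: it is stated as a known result, with the sentence ``We recall the theory of elliptic units, as described in \cite[\S 15.5--6]{kato04}'' serving as the entire justification. So there is no ``paper's own proof'' to compare your sketch against; the paper simply imports the construction and its properties from Kato.

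That said, your sketch has genuine problems as a stand-alone argument. First, you build the construction on Theorem~\ref{thm:cmpair}, which requires the extra hypotheses $\overline{\ff}=\ff$ and that the smallest integer in $\ff$ be $\ge 5$; the statement you are proving assumes only $\calO_K^\times\cap(1+\ff)=\{1\}$, so you cannot invoke that theorem in general. The CM-pair one actually needs exists over $K(\ff)$ (not $K(\ff)^+$) without those extra conditions, and indeed the paper's later proof of Proposition~\ref{prop:ellunitsreal} writes $\ae_\ff={}_\fa\theta_E(\alpha)^{-1}$ with $(E,\alpha)$ the canonical CM-pair over $K(\ff)$. Second, your argument for (i) is internally inconsistent: you claim ${}_\fa\theta$ is normalized over $K(\ff)^+$ and $\alpha\in E(K(\ff)^+)$, which would force the value into $K(\ff)^+$, yet in general $\ae_\ff\notin K(\ff)^+$ (Proposition~\ref{prop:ellunitsreal} shows complex conjugation sends it to ${}_{\bar\fa}\e_\ff$). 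Third, your citation is off: the relevant sections of \cite{kato04} are \S 15.4--15.6, not \S 1.4--1.5. Finally, the integrality argument in (ii) is too quick: saying $\alpha$ avoids the divisor of ${}_\fa\theta$ handles valuations at primes of good reduction, but the delicate content of the unit statement lies at the bad primes and uses the norm-compatibility/distribution relations rather than a bare divisor count.
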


  Vital for our purposes is the following complex conjugation symmetry of the elliptic units:

  \begin{proposition}\label{prop:ellunitsreal}
   If $\ff$ satisfies the hypotheses of Theorem \ref{thm:cmpair}, then we have
   \[ \overline{\ae_\ff} = {}_{\bar\fa}\mathbf{e}_{\ff}.\]
  \end{proposition}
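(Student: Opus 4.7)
The plan is to deduce the identity from the functoriality of Kato's elliptic-unit construction combined with the fact, provided by Theorem \ref{thm:cmpair} and the remark immediately following it, that the canonical CM-pair $(E,\alpha)$ of modulus $\ff$ descends to $K(\ff)^+ \subset \RR$ in such a way that the complex conjugation automorphism of $E(\CC)$ corresponds under the uniformization $E(\CC) \cong \CC/\ff$ to the usual map $z \mapsto \bar z$. This map is well defined because $\bar\ff = \ff$, and it fixes $\alpha$ (which corresponds to $1 \in \CC/\ff$).

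Next, I would recall the construction of ${}_\fa\e_\ff$ from \cite[\S 15.5--6]{kato04}: it is obtained by evaluating a Siegel-unit / theta-function section, parametrised by the auxiliary ideal $\fa$, at the CM-pair $(E,\alpha)$. The key point is that $\fa$ enters this construction through the prescribed action of $\calO_K$ on $E$ (the data of a CM-pair includes an identification $\End_{KL}(E) \cong \calO_K$, and the definition of ${}_\fa\e_\ff$ references the $\fa$-torsion subscheme of $E$ under this identification). The construction is functorial in the triple $(E,\alpha,\fa)$.

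Now I would apply complex conjugation. Over the quadratic extension $K(\ff) = K \cdot K(\ff)^+$, the endomorphism ring of $E$ is $\calO_K$, and the prescribed identification is pinned down by the action on $\coLie(E)$, which uses our fixed embedding $K \into \CC$. Complex conjugation restricts to the non-trivial element of $\Gal(K(\ff)/K(\ff)^+)$, and in particular to the non-trivial element of $\Gal(K/\QQ)$, so it conjugates the endomorphism $[a]$ of $E$ to $[\bar a]$. Consequently the $\fa$-torsion subscheme of $E$ is interchanged with the $\bar\fa$-torsion under conjugation. Applying complex conjugation to the theta/sigma-function expression defining ${}_\fa\e_\ff$ at $(E,\alpha)$ — where $\alpha$ is fixed but the $\fa$-torsion is replaced by the $\bar\fa$-torsion — therefore yields exactly the defining expression for ${}_{\bar\fa}\e_\ff$.

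The main technical obstacle is bookkeeping Kato's precise normalisation: one must check that all the auxiliary Weierstrass data attached to the real lattice $\ff$ (values of $\sigma$, quasi-periods, Eisenstein-type correction factors) transform as expected under $z \mapsto \bar z$, for instance via the identity $\overline{\sigma(z;\ff)} = \sigma(\bar z;\ff)$ valid because $\ff = \bar\ff$. Once these ingredients are matched up carefully, no Galois-theoretic ambiguity remains and the equality $\overline{{}_\fa\e_\ff} = {}_{\bar\fa}\e_\ff$ follows directly from the functoriality discussed above.
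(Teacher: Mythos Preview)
Your proposal is correct and follows essentially the same line as the paper's proof: both use the descent of the CM-pair to $K(\ff)^+$ from Theorem~\ref{thm:cmpair}, observe that complex conjugation fixes $\alpha$ while sending the $\fa$-structure to the $\bar\fa$-structure, and conclude via functoriality of Kato's theta construction. The one difference is that where you propose to verify explicitly that the Weierstrass/sigma-function data transform correctly under $z\mapsto\bar z$, the paper bypasses this computation by invoking the \emph{uniqueness} characterisation of ${}_\fa\theta_E$ from \cite[\S15.4]{kato04}: once one knows $\iota({}_\fa E)={}_{\bar\fa}E$, uniqueness forces $({}_\fa\theta_E)^\iota={}_{\bar\fa}\theta_E$ without any bookkeeping.
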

  
  \begin{proof}
   This follows from the construction of the elliptic units. We have 
   \[ \ae_\ff = {}_\fa \theta_E(\alpha)^{-1}\]
   where $(E, \alpha)$ is the canonical CM pair over $K(\ff)$, and ${}_\fa \theta_E$ is the element of the function field of $E$ constructed in \cite[\S 15.4]{kato04}. 
   
   By Theorem \ref{thm:cmpair}, $E$ admits a model over $K(\ff)^+$, and it is clear that if $\iota$ is the nontrivial element of $\Gal(K(\ff) / K(\ff)^+)$ arising from complex conjugation, we have $\iota({}_{\fa}E) = {}_{\bar\fa}E$ and hence (by the uniqueness of ${}_\fa \theta_E$) we have $({}_\fa \theta_E)^\iota = {}_{\bar\fa} \theta_E$. Since $\alpha \in E(K(\ff)^+)$, we deduce that
   \[ \overline{\ae_\ff} = ({}_\fa \theta_E)^\iota(\alpha)^{-1} = {}_{\bar\fa} \theta_E(\alpha)^{-1} = {}_{\bar\fa}\mathbf{e}_{\ff}\]
   as required.
  \end{proof}

  \begin{remark}
   Modulo differing choices of conventions, this is the formula labelled ``Transport of Structure'' in \S 2.5 of \cite{gross80}.
  \end{remark}

%
%
%

 \subsection{Elliptic units in Iwasawa cohomology}
  
  Let $p$ be a rational prime which splits in $K$. For fixed $\ff$ (which we shall assume prime to $p$), the ideal $\fg = \ff p^n$ satisfies the condition $\calO_K^\times \cap (1 + \fg) = \{1\}$ for all $n \gg 0$, so if $(\fa, 6p\ff) = 1$ we may define the elements $\ae_{\ff p^n}$. These are \emph{norm-compatible} (c.f. \cite[\S15.5]{kato04}), and we may extend their definition to all $n \ge 0$ using the norm maps. 
  
  \begin{note}
   Since $\ff p^n$ has at least two prime factors for $n \ge 1$, we have $\ae_{\ff p^n} \in \calO_{K(\ff p^n)}^\times$.
  \end{note} 
  
  Let $S$ be a set of places of $K$ containing the infinite places and the primes above $p$. Then we have the Kummer maps
  \[ \kappa_L : \Zp \otimes_{\ZZ} \calO_{L, S}^\times \rTo^\cong H^1_S(L, \Zp(1)).\]
  Since the sequence of elements $\ae_{\ff p^\infty} = (\ae_{\ff p^n})_{n \ge 0}$ is a norm-compatible sequence of units, their images under the Kummer maps are corestriction-compatible, so we obtain an element
  \[ \ae_{\ff p^\infty} \in H^1_{\Iw, S} (K(\ff p^\infty) , \Zp(1)) = \varprojlim_{n} H^1_S(K(\ff p^n), \Zp(1)).\]
    
  \begin{theorem}\label{thm:ellunitsreal2}
   If $\ff$ is Galois-stable, then we have 
   \[ \iota_* \left(\ae_{\ff p^\infty}\right) = {}_{\bar\fa}\e_{\ff p^\infty},\]
   where $\iota_*$ is the involution of $H^1_{\Iw, S} (K(\ff p^\infty), \Zp(1))$ induced by complex conjugation.
  \end{theorem}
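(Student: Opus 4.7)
The strategy is to deduce the Iwasawa-theoretic statement from its finite-level analog, Proposition \ref{prop:ellunitsreal}, by using the Kummer map as a Galois-equivariant bridge between units and $H^1(-,\Zp(1))$ and then passing to the inverse limit.

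First, I would check that for all sufficiently large $n$ the ideal $\ff p^n$ satisfies the hypotheses of Theorem \ref{thm:cmpair}. Galois-stability of $\ff$ together with the fact that $p$ splits in $K$ (so that $p\calO_K = \fp \bar\fp$ is itself Galois-stable) ensures $\overline{\ff p^n} = \ff p^n$. The condition $\calO_K^\times \cap (1+\ff p^n) = \{1\}$ holds for $n \gg 0$ (as already used to define the units at these levels), and the smallest integer in $\ff p^n$ is divisible by $p^n$ and hence is $\ge 5$ for $n$ large. Consequently, Proposition \ref{prop:ellunitsreal} applies at each such level and gives the finite-level identity $\overline{\ae_{\ff p^n}} = {}_{\bar\fa}\e_{\ff p^n}$ in $\calO_{K(\ff p^n)}^\times$.

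Next I would spell out how complex conjugation $c \in \Gal(\overline{\QQ}/\QQ)$ induces the involution $\iota_*$ on Iwasawa cohomology: the Galois-stability of $K(\ff p^n)$ established above means that $c$ normalizes $\Gal(K^S/K(\ff p^n))$, so it induces an involution on each $H^1_S(K(\ff p^n),\Zp(1))$, and these are compatible with the corestriction maps used to form the inverse limit. Functoriality of the Kummer sequence under field automorphisms then gives $\iota_* \circ \kappa_{K(\ff p^n)} = \kappa_{K(\ff p^n)} \circ c$, i.e.\ Kummer is $c$-equivariant at every finite level.

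Combining the previous two steps, at each sufficiently large $n$ we have
\[
\iota_*\bigl(\kappa_{K(\ff p^n)}(\ae_{\ff p^n})\bigr) = \kappa_{K(\ff p^n)}\bigl(\overline{\ae_{\ff p^n}}\bigr) = \kappa_{K(\ff p^n)}({}_{\bar\fa}\e_{\ff p^n}).
\]
Since these levels form a cofinal subsystem for the inverse limit defining $H^1_{\Iw,S}(K(\ff p^\infty),\Zp(1))$, and since $\iota_*$ is by construction the limit of the finite-level involutions, this identity lifts to the inverse limit to give the desired equality $\iota_*(\ae_{\ff p^\infty}) = {}_{\bar\fa}\e_{\ff p^\infty}$. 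The only real point requiring care is the verification that the hypotheses of Theorem \ref{thm:cmpair} do hold for a cofinal family of levels $\ff p^n$—in particular checking that $p\calO_K$ is Galois-stable at split $p$—so that Proposition \ref{prop:ellunitsreal} can be invoked; the rest is a formal compatibility of Kummer theory with automorphisms of the ground field.
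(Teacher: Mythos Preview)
Your proposal is correct and follows essentially the same approach as the paper: the paper's proof simply states that the result is ``immediate from Proposition~\ref{prop:ellunitsreal}, since $\ff p^n$ satisfies the conditions of Theorem~\ref{thm:cmpair} for all $n \gg 0$.'' You have supplied the details behind this one-line argument---verifying the hypotheses of Theorem~\ref{thm:cmpair} at high enough levels, the Galois-equivariance of the Kummer map, and the passage to the inverse limit---but the underlying idea is identical.
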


  \begin{proof} 
   Immediate from Proposition \ref{prop:ellunitsreal}, since $\ff p^n$ satisfies the conditions of Theorem \ref{thm:cmpair} for all $n \gg 0$.
  \end{proof}  
  
  \begin{definition}
   We also define the element
   \[ \e_{\ff p^\infty} = (N(\fa) - [\fa])^{-1} \cdot \ae_{\ff p^\infty} \in Q(G_{\ff p^\infty}) \otimes_{\Lambda(G_{\ff p^\infty})} H^1_{\Iw, S} (K(\ff p^\infty), \Zp(1)),\]
   where $\Lambda(G_{\ff p^\infty})$ is the Iwasawa algebra of $G_{\ff p^\infty} = \Gal(K(\ff p^\infty) / K)$ and $Q(G_{\ff p^\infty})$ its total ring of quotients. 
  \end{definition}
  
  \begin{note}
   The element $\e_{\ff p^\infty}$ is independent of the choice of $\fa$. 
  \end{note} 
  
  \begin{corollary}\label{cor:iotainvariant}
   We have $\iota_*(\e_{\ff p^\infty}) = \e_{\ff p^\infty}$.
  \end{corollary}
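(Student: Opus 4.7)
The plan is a direct computation using Theorem \ref{thm:ellunitsreal2}, the definition of $\e_{\ff p^\infty}$, and its independence from the auxiliary ideal.

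First I would unpack how $\iota_*$ acts on the total quotient ring $Q(G_{\ff p^\infty})$. Complex conjugation $c$ acts on $G_{\ff p^\infty}$ by conjugation, and since it sends a prime $\fa$ of $K$ (coprime to $\ff p$) to $\bar\fa$, it sends the arithmetic Frobenius $[\fa]$ to $[\bar\fa]$. Moreover $N(\fa) = N(\bar\fa)$, so $\iota_*\bigl((N(\fa) - [\fa])^{-1}\bigr) = (N(\bar\fa) - [\bar\fa])^{-1}$. Also, since the hypotheses of Theorem~\ref{thm:cmpair} are satisfied by $\ff p^n$ for all $n \gg 0$, Theorem~\ref{thm:ellunitsreal2} applies and gives $\iota_*(\ae_{\ff p^\infty}) = {}_{\bar\fa}\e_{\ff p^\infty}$.

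Combining these facts, I would simply compute
\[
\iota_*(\e_{\ff p^\infty}) = \iota_*\bigl((N(\fa) - [\fa])^{-1}\bigr) \cdot \iota_*(\ae_{\ff p^\infty}) = (N(\bar\fa) - [\bar\fa])^{-1} \cdot {}_{\bar\fa}\e_{\ff p^\infty}.
\]
The right-hand side is precisely the element $\e_{\ff p^\infty}$ defined using the ideal $\bar\fa$ in place of $\fa$. Since the Note immediately preceding the corollary records that $\e_{\ff p^\infty}$ is independent of the choice of auxiliary ideal (an immediate consequence of the distribution relation $(N(\fb) - [\fb]) \cdot \ae_{\ff} = (N(\fa) - [\fa]) \cdot {}_\fb\e_\ff$ from the previous theorem), this is equal to $\e_{\ff p^\infty}$, as required.

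There is essentially no obstacle here: all the work has been done in establishing Proposition~\ref{prop:ellunitsreal} and its Iwasawa-theoretic consequence Theorem~\ref{thm:ellunitsreal2}. The only thing one needs to verify carefully is the compatibility of $\iota_*$ with the $Q(G_{\ff p^\infty})$-module structure, which is immediate from the fact that $\iota_*$ is a ring automorphism of $Q(G_{\ff p^\infty})$ that is semilinear with respect to the Iwasawa-cohomology action.
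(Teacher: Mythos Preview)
Your proof is correct and follows essentially the same route as the paper's: both use the semilinearity of $\iota_*$ (sending $[\fa]$ to $[\bar\fa]$) to extend it to $Q(G_{\ff p^\infty})$, then apply Theorem~\ref{thm:ellunitsreal2} and the independence of $\e_{\ff p^\infty}$ from the auxiliary ideal $\fa$. Your version simply spells out the computation a bit more explicitly than the paper's terse account.
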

  \begin{proof}
   The automorphism $\iota_*$ of $H^1_{\Iw, S} (K(\ff p^\infty), \Zp(1))$ is $\Lambda(G_{\ff p^\infty})$-semilinear, with the action of $\iota$ on $G_{\ff p^\infty}$ being given by conjugation in $\Gal(\overline{K} / \QQ)$; hence $\iota_*$ extends canonically to the tensor product with $Q(G_{\ff p^\infty})$; and since $\iota [\fa]\iota = [\bar\fa]$, this finishes the proof by Theorem \ref{thm:ellunitsreal2}.
  \end{proof}
    
  Let $W$ be any continuous representation of $G_{\ff p^\infty}$ on a one-dimensional vector space over some finite extension $L$ of $\Qp$. Then we have an isomorphism
  \begin{equation}\label{eq:twistwithW} 
   H^1_{\Iw, S}(K(\ff p^\infty), \Zp(1)) \otimes_{\Zp} W \rTo^\cong H^1_{\Iw, S}(K(\ff p^\infty), W(1)).
  \end{equation}
     
  \begin{definition}
   For an element $w \in W$,  let $\e_{\ff p^\infty}(w)$ be the image of $\e_{\ff p^\infty} \otimes w$ under \eqref{eq:twistwithW}, which is an element of
  \[ Q(G_{\ff p^\infty}) \otimes_{\Lambda(G_{\ff p^\infty})} H^1_{\Iw, S}(K(\ff p^\infty), W(1)).\]
  Define
  \[ \e_\infty(w) \in Q(\Gamma) \otimes_{\Lambda(\Gamma)} H^1_{\Iw, S}(K_\infty, W(1))\]
  to be the image of $\e_{\ff p^\infty}(w)$ under the corestriction map 
  \[ H^1_{\Iw, S}(K(\ff p^\infty), W(1)) \rTo H^1_{\Iw, S}(K_\infty, W(1)).\]
  \end{definition}
  
  \begin{lemma}
   If $W$ has no fixed points under $\Gal(K(\ff p^\infty) / K_\infty)$, then we have
    \[ \e_{\infty}(w) \in H^1_{\Iw, S}(K_\infty, W(1)).\]
  \end{lemma}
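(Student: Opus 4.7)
The element $\e_\infty(w)$ is a priori an element of $Q(\Gamma) \otimes_{\Lambda(\Gamma)} H_0$, with $H_0 := H^1_{\Iw,S}(K_\infty, W(1))$; explicitly, it is $(N(\fa) - [\fa]_\Gamma)^{-1} \cdot \cores(\ae_{\ff p^\infty}(w))$, since the corestriction map is $\Lambda(G_{\ff p^\infty})$-linear with the target's action factoring through the surjection $\Lambda(G_{\ff p^\infty}) \twoheadrightarrow \Lambda(\Gamma)$. The plan is to show that $(N(\fa) - [\fa]_\Gamma)$ divides $\cores(\ae_{\ff p^\infty}(w))$ in $H_0$; the hypothesis $W^\Delta = 0$ (with $\Delta := \Gal(K(\ff p^\infty)/K_\infty)$) enters in two ways.

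First, I would observe that $\e_\infty(w)$ is independent of the auxiliary ideal $\fa$: applying $\cores$ to the distribution relation $(N(\fb)-[\fb])\ae_{\ff p^\infty} = (N(\fa)-[\fa]){}_\fb\e_{\ff p^\infty}$ and using the $\Lambda$-linearity of corestriction yields $(N(\fb)-[\fb]_\Gamma)\cores(\ae_{\ff p^\infty}(w)) = (N(\fa)-[\fa]_\Gamma)\cores({}_\fb\e_{\ff p^\infty}(w))$ in $H_0$, which gives the independence. Second, under the hypothesis one has $H^0(K_\infty, W(1)) = W^\Delta \otimes \Zp(1) = 0$ (since the cyclotomic character is trivial on $G_{K_\infty}$), which forces $H_0$ to be $\Lambda(\Gamma)$-torsion-free, so that the natural map $H_0 \hookrightarrow Q(\Gamma) \otimes H_0$ is injective and ``$\e_\infty(w) \in H_0$'' is unambiguous.

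For the divisibility proper, I decompose $\Lambda_L(\Gamma) = \prod_\eta L[[T]]$ according to characters $\eta$ of the torsion part of $\Gamma$, and work component by component. The element $N(\fa) - [\fa]_\Gamma$, viewed as a function on characters $\psi$ of $\Gamma$, vanishes at $\psi$ exactly when $\psi([\fa]_\Gamma) = N(\fa)$; because of the identity $\chi_\mathrm{cyc}([\fa]_\Gamma) = N(\fa)$, the cyclotomic character $\chi_\mathrm{cyc}$ is an \emph{unavoidable} zero, while other zeros depend on $\fa$. For each height-one prime $\fp \subset \Lambda_L(\Gamma)$ different from the one corresponding to $\chi_\mathrm{cyc}$, Chebotarev density produces an $\fa'$ coprime to $6p\ff$ with $(N(\fa') - [\fa']_\Gamma) \notin \fp$, and the independence established above then places $\e_\infty(w)$ in the localisation $(H_0)_\fp$.

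The decisive step is to handle the cyclotomic prime, and this is where I expect the main obstacle to lie. Here one must show that $\cores(\ae_{\ff p^\infty}(w))$ vanishes modulo the cyclotomic ideal, equivalently that its image under specialisation $H_0 \to H^1_S(K, W)$ at $\chi_\mathrm{cyc}$ is zero. The idea is to exploit the explicit description $\cores(\ae_{\ff p^\infty}(w)) = |\Delta| \cdot e_{\chi_W^{-1}}(\ae_{\ff p^\infty}) \otimes w$ in the identification $H_0 \hookrightarrow (H^1_{\Iw,S}(K(\ff p^\infty),\Zp(1)) \otimes W)^\Delta$ (extended to the profinite setting by continuity of the $\Delta$-action): specialising at a character of $\Gamma$ trivialises the $\Delta$-action on the ambient module, whereupon the projection to the $\chi_W^{-1}$-isotypic component vanishes identically because $\chi_W|_\Delta \neq 1$ by hypothesis. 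Combining this vanishing with the Chebotarev argument for all other primes, and with the torsion-freeness of $H_0$, completes the proof.
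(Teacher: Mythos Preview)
Your formula for the denominator is wrong, and this error drives the entire structure of your argument. The twisting isomorphism $H^1_{\Iw,S}(K(\ff p^\infty),\Zp(1))\otimes_{\Zp} W \cong H^1_{\Iw,S}(K(\ff p^\infty),W(1))$ is not $\Lambda(G_{\ff p^\infty})$-linear for the ``action on the first tensor factor only'' structure you are using on the left; it is \emph{semilinear} with respect to the ring automorphism $\tau_*: g\mapsto\tau(g)^{-1}g$, where $\tau$ (your $\chi_W$) is the character giving the $G_{\ff p^\infty}$-action on $W$. In the Shapiro-lemma model the map is $j(c\otimes w)(g)=\tau_*(c(g))\otimes w$, as in the proof of Lemma~\ref{lem:twistdiagram}. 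Consequently the relation $(N\fa - [\fa])\,\e_{\ff p^\infty} = \ae_{\ff p^\infty}$ transports to $(N\fa - \tau([\fa])^{-1}[\fa])\,\e_{\ff p^\infty}(w) = \ae_{\ff p^\infty}(w)$, and after corestriction the denominator in $\Lambda_L(\Gamma)$ is $N(\fa)-\tau([\fa])^{-1}[\fa]_\Gamma$, not $N(\fa)-[\fa]_\Gamma$.

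With the correct denominator your step~5 disappears. A character $\eta$ of $\Gamma$ kills every $N(\fa)-\tau([\fa])^{-1}[\fa]_\Gamma$ iff $N(\fa)\,\tau([\fa])=\eta([\fa])$ for all $\fa$; since $N(\fa)=\chi([\fa])$, Chebotarev then forces $\tau=\chi^{-1}\eta$ as characters of $G_{\ff p^\infty}$, so $\tau$ would factor through $\Gamma$, contradicting $W^\Delta=0$. Hence the ideal generated by these denominators already contains a power of $p$, and integrality follows at once. This is exactly the paper's proof, and the hypothesis enters precisely here, not via any torsion-freeness of $H_0$. The cyclotomic character is \emph{not} an unavoidable zero once the $\tau$-twist is present, so your separate treatment of it is unnecessary --- and in any case the sketch you give there does not work: specialising the $\Gamma$-variable does nothing to the $\Delta$-action on the ambient module, and there is no mechanism forcing the numerator $\cores(\ae_{\ff p^\infty}(w))$ to vanish at $\chi$.
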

  
  \begin{proof}
    Let $I$ be the ideal in $\Lambda(\ff p^\infty)$ generated by the elements $(N\fa - [\fa])$ for integral ideals $\fa$ prime to $6\ff$. Suppose $G_{\ff p^\infty}$ acts on $W$ via the character $\tau: G_{\ff p^\infty} \to L$. Then we must show that the ideal in $\Lambda(\Gamma)$ generated by the elements
   \[ \{ (N\fa - \tau([\fa])^{-1} [\fa]) : \fa \text{ is an integral ideal coprime to $6\ff$}\}\]
   contains a power of $p$. However, if this is not the case, it must consist of elements of $\Lambda(\Gamma)$ which all vanish at some character $\eta$ of $\Gamma$. Then $\chi([\fa]) \tau([\fa]) - \eta([\fa])$ vanishes for every $\fa$. By the Chebotarev density theorem, we must have $\tau = \chi^{-1} \eta$, which contradicts the assumption that $\tau$ does not factor through $\Gamma$.
  \end{proof}

  We write $\iota W$ for the representation of $G_{\ff p^\infty}$ that acts on $\{\iota w:w\in W\}$ via $g\cdot(\iota w)=\iota (\iota g\iota)\cdot w$.
 
  \begin{theorem}\label{thm:invariant}
   If $W$ has no fixed points under $\Gal(K(\ff p^\infty) / K_\infty)$, the element
    \[ \e_\infty(w) \in H^1_{\Iw, S}(K_\infty / K, W(1))\]
    satisfies
    \[ \iota_*(\e_\infty(w)) = \e_\infty(\iota w)\]
   where $\iota_*$ is induced from the maps
    \[ H^1_S(K(\ff p^n), W(1)) \rTo H^1_{S}(K(\ff p^n), (\iota W)(1))\]
   sending a cocycle $\tau$ to the cocycle $g \mapsto \iota \tau(\iota g \iota)$, for each $n \ge 0$.
  \end{theorem}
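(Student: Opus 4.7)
The plan is to deduce the theorem from Corollary \ref{cor:iotainvariant} by means of two functoriality statements: naturality of the twist isomorphism \eqref{eq:twistwithW} with respect to $\iota_*$, and compatibility of $\iota_*$ with corestriction.

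First I would verify that the twist isomorphism \eqref{eq:twistwithW} intertwines the action $\iota_* \otimes \iota$ on the source (where $\iota \colon W \to \iota W$ is the tautological $L$-linear bijection $w \mapsto \iota w$) with the action $\iota_*$ on the target. At finite level, this reduces to a direct cocycle computation: the twisted cocycle $g \mapsto c(g) \otimes w$, once pushed through $\iota_*$, becomes $g \mapsto \iota c(\iota g \iota) \otimes \iota w$, which is precisely the twist of $\iota_* c$ by $\iota w$ (note that the formula for $\iota_*$ of a cocycle involves conjugating the argument by $\iota$ and acting on the coefficient, and the coefficient action of $\iota$ on $\Zp(1) \otimes W = W(1)$ matches the action on $(\iota W)(1)$ by the very definition of $\iota W$). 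Combining this compatibility with Corollary \ref{cor:iotainvariant} gives
\[ \iota_*\bigl(\e_{\ff p^\infty}(w)\bigr) = \e_{\ff p^\infty}(\iota w) \]
inside $Q(G_{\ff p^\infty}) \otimes_{\Lambda(G_{\ff p^\infty})} H^1_{\Iw, S}(K(\ff p^\infty), (\iota W)(1))$.

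Next I would apply the corestriction map $H^1_{\Iw, S}(K(\ff p^\infty), -) \to H^1_{\Iw, S}(K_\infty, -)$ and use that it commutes with $\iota_*$. This compatibility holds because $\iota$ normalizes $\Gal(\overline{K}/K_\infty)$ inside $\Gal(\overline{K}/\QQ)$: the field $K_\infty = K(\mu_{p^\infty})$ is $\iota$-stable since $K$ is imaginary quadratic and the cyclotomic tower is manifestly preserved by complex conjugation. Applying corestriction to the preceding identity then yields the desired equality $\iota_*(\e_\infty(w)) = \e_\infty(\iota w)$.

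The only step calling for genuine care is the first naturality statement; the obstacle is notational rather than substantive, since $W$ is nontrivial as a $\Gal(\overline{K}/K(\ff p^n))$-module at each finite level, and so the twist \eqref{eq:twistwithW} on cocycles is most transparent via the Shapiro-type identification of $H^1_{\Iw, S}(K(\ff p^\infty), -)$ with $H^1(\Gal(K^S/K), \Lambda(G_{\ff p^\infty}) \otimes -)$. Once this dictionary is in place the calculation is formal, and the other two stages (applying the corollary and commuting corestriction with $\iota_*$) are routine.
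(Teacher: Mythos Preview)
Your proposal is correct and follows essentially the same route as the paper: the paper isolates the twist-compatibility statement as a separate lemma (Lemma~\ref{lem:twistdiagram}), proves it precisely via the Shapiro identification $H^1_{\Iw,S}(K(\ff p^\infty), M(1)) \cong H^1_S(K, M \otimes \Lambda^\sharp(G_{\ff p^\infty})(1))$ that you anticipate in your final paragraph, and then concludes by invoking Corollary~\ref{cor:iotainvariant} and the compatibility of $\iota_*$ with corestriction exactly as you do.
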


  We split the proof of the theorem into a number of steps. 
  
   \begin{definition}   
    Let $\Lambda^{\sharp}(G_{\ff p^\infty})(1)$ denote $\Lambda(G_{\ff p^\infty})(1)$ endowed with the action of $\Gal(K^S / K)$ via the product of the cyclotomic character with the \emph{inverse} of the canonical character $\Gal(K^S / K) \twoheadrightarrow G_{\ff p^\infty} \into \Lambda(G_{\ff p^\infty})^\times$, i.e. $g.\omega=\chi(g) \bar g^{-1}\omega$ for any $g\in \Gal(K^S / K) $ and $\omega\in \Lambda^\sharp(G)$. Here, $\bar g$ denotes the image of $g$ in $G_{\ff p^\infty}$.    
   \end{definition}
    
  \begin{lemma}\label{lem:twistdiagram}
   We have a commutative diagram
   \begin{equation}\label{iotatwist}
    \begin{diagram}
     H^1_{\Iw,S}(K(\ff p^\infty), \Zp(1)) \otimes_{\Zp} W & \rTo^\cong & H^1_{\Iw,S}(K(\ff p^\infty), W(1))\\
     \dTo^{\iota_* \otimes \iota} & & \dTo^{\iota_*}\\
     H^1_{\Iw,S}(K(\ff p^\infty), \Zp(1)) \otimes_{\Zp} \iota W &\rTo^\cong& H^1_{\Iw,S}(K(\ff p^\infty), (\iota W)(1))\\
    \end{diagram}
    \end{equation}
   where the left-hand vertical map is the tensor product of the automorphism $\iota_*$ of $H^1_{\Iw, S}(K_\infty, \Zp(1))$ and the canonical map $\iota: W \to \iota W$, and the right-hand vertical map is as defined in the statement of Theorem~\ref{thm:invariant}.
  \end{lemma}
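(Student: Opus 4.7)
The plan is to describe the twist isomorphism \eqref{eq:twistwithW} via Shapiro's lemma and then observe that both vertical maps of \eqref{iotatwist} are induced by a single natural involution on the coefficient modules. Invoking Shapiro's lemma, one rewrites
\[ H^1_{\Iw,S}(K(\ff p^\infty), V) \cong H^1_S(K, V \otimes_{\Zp} \Lambda(G_{\ff p^\infty})^\sharp) \]
for any continuous $\Gal(K^S/K)$-representation $V$. Applied to $V = \Zp(1)$ and to $V = W(1)$, the isomorphism \eqref{eq:twistwithW} becomes the map induced on $H^1_S(K,-)$ by a $\Gal(K^S/K)$-equivariant isomorphism of coefficient modules
\[ \Lambda_L(G_{\ff p^\infty})^\sharp(1) \otimes_L W_{\mathrm{triv}} \stackrel{\sim}{\longrightarrow} \Lambda_L(G_{\ff p^\infty})^\sharp(1) \otimes_L W, \]
where on the left the $\Gal(K^S/K)$-action on the $W$-factor is trivial; the map is the identity on underlying $L$-vector spaces, and its equivariance relies on the $\sharp$-twist of $\Lambda(G_{\ff p^\infty})$ absorbing the $W$-action.

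Next I would verify that both vertical arrows of \eqref{iotatwist} arise from a single natural involution of the coefficient modules. The map $g \mapsto \iota g \iota$ is an automorphism of $\Gal(\overline{K}/\QQ)$ preserving $\Gal(K^S/K)$, and descends to an involution of $G_{\ff p^\infty}$, hence an involution of $\Lambda(G_{\ff p^\infty})$. Combined with the canonical $L$-linear map $W \to \iota W$, $w \mapsto \iota w$, this produces a homomorphism of coefficient modules
\[ \Lambda_L(G_{\ff p^\infty})^\sharp(1) \otimes_L W \longrightarrow \Lambda_L(G_{\ff p^\infty})^\sharp(1) \otimes_L \iota W \]
that intertwines the $\Gal(K^S/K)$-actions on source and target. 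On cohomology it induces the right-hand vertical map of \eqref{iotatwist}, by direct comparison with the explicit cocycle formula $\tau \mapsto \iota \tau(\iota g \iota)$; on the other hand, once one transfers to $W_{\mathrm{triv}}$ via the first step's twist isomorphism, this same coefficient-module involution becomes the tensor product of the involution on $\Lambda^\sharp(G_{\ff p^\infty})(1)$ (which induces $\iota_*$ on the $\Zp(1)$-valued Iwasawa cohomology) with the map $W \to \iota W$, which is exactly $\iota_* \otimes \iota$. Commutativity of \eqref{iotatwist} then follows from the naturality of Shapiro's lemma and of \eqref{eq:twistwithW} in the coefficient module.

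The only real obstacle is bookkeeping: one must verify that the Shapiro-lemma identification is itself equivariant for the involution induced by $\iota$, and track the sign conventions for the $\sharp$-twist and the cyclotomic character consistently throughout. Once these are pinned down the argument is entirely formal.
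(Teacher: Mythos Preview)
Your approach is essentially the paper's: both reduce via Shapiro's lemma to $H^1_S(K,\Lambda^\sharp(G_{\ff p^\infty})(1)\otimes -)$ and then check commutativity at the level of coefficient modules. However, one detail is stated incorrectly. The equivariant isomorphism of coefficient modules
\[
\Lambda_L(G_{\ff p^\infty})^\sharp(1)\otimes_L W_{\mathrm{triv}}\;\stackrel{\sim}{\longrightarrow}\;\Lambda_L(G_{\ff p^\infty})^\sharp(1)\otimes_L W
\]
is \emph{not} the identity on underlying vector spaces. If $\tau$ is the character by which $G_{\ff p^\infty}$ acts on $W$, then on the left $g$ acts by $\chi(g)\bar g^{-1}$ on the $\Lambda$-factor and trivially on $W$, whereas on the right it acts by $\chi(g)\bar g^{-1}\otimes\tau(g)$; the identity map intertwines these only when $\tau$ is trivial. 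The paper makes the correct map explicit: it is $\lambda\otimes w\mapsto \tau_*(\lambda)\otimes w$, where $\tau_*:\Lambda(G_{\ff p^\infty})\to\Lambda(G_{\ff p^\infty})$ is the ring automorphism extending $g\mapsto\tau(g)^{-1}g$.

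Once this is fixed, your argument and the paper's coincide. The commutativity of the square then amounts to the observation that conjugation by $\iota$ carries $\tau$ to $\tau^\iota$ (the character of $\iota W$), and hence intertwines $\tau_*$ with $(\tau^\iota)_*$; this is exactly the ``bookkeeping'' you flag at the end, and it is the substantive content of the verification.
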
 
  
  \begin{proof}
   We will deduce this isomorphism by using an alternative definition of the Iwasawa cohomology which renders the horizontal maps in the diagram easier to handle.  By Shapiro's lemma, we have a canonical isomorphism of $\Lambda(G_{\ff p^\infty})$-modules
   \[ H^1_{\Iw, S}(K(\ff p^\infty), M(1)) \cong H^1_{S}(K, M \otimes_{\Zp} \Lambda^{\sharp}\big(G_{\ff p^\infty})(1)\big)\]
   for any $\Gal(K^S / K)$-module $M$ which is finite-rank over $\Zp$ or $\Qp$.
   
   Let $\tau$ be the character by which $G_{\ff p^\infty}$ acts on $W$, and define $\tau_*:\Lambda^\sharp(G)\rightarrow \Lambda^\sharp(G)$ to be the map induced by $g\rightarrow \tau(g)^{-1}g$. Then the natural twisting map 
   \[ j:H^1_S\big(K,\Lambda^\sharp(G)(1)\big)\otimes W \rTo^\cong H^1_S\big(K,\Lambda^\sharp(G)(1)\otimes W\big),\]
   is explicitly given as follows: if $c:\Gal(K^S\slash K)\rightarrow \Lambda^\sharp(G)(1)$ is a cocycle and $w\in W$, define
   \[ j(c\otimes w)(g)=\tau_*(c(g))\otimes w.\] 
   We check that $j(c\otimes w)$ is a cocycle. Let $h,g\in \Gal(K^S\slash K)$. Then 
   \begin{align*}
    j(c\otimes w)(gh) & = \tau_*(c(gh))\otimes w \\
                      & = \tau_*(g.c(h))\otimes w+\tau_*c(g)\otimes w \\
                      & = \chi(g)\tau_*(g^{-1}c(h))\otimes w+\tau_*c(g)\otimes w \\
                      & = \chi(g)\tau(g)\hspace{1ex}g^{-1}[\tau_*(c(h))]\otimes w+ \tau_*(c(g))\otimes w \\
                      & = g.[j(c\otimes w)(h)]+ j(c\otimes w)(g)
   \end{align*}
   Rewrite the diagram \eqref{iotatwist} as
   \begin{equation}
    \begin{diagram}
    H^1_{S}(K, \Lambda^\sharp(G)(1)) \otimes_{\Zp} W & \rTo^{j_W} & H^1_{S}(K, \Lambda^\sharp(G)(1)\otimes W)\\
    \dTo^{\iota_* \otimes \iota} & & \dTo^{\iota_*}\\
    H^1_{S}(K, \Lambda^\sharp(G)(1)) \otimes_{\Zp} \iota W & \rTo^{j_{\iota W}} & H^1_{S}(K, \Lambda^\sharp(G)(1)\otimes \iota W)\\
   \end{diagram}
   \end{equation}
   It is then immediate from the description of $j$ that the diagram commutes, which finishes the proof.
  \end{proof}
  
  \begin{proof}[Proof of Theorem \ref{thm:invariant}]
   By Corollary \ref{cor:iotainvariant} and Lemma \ref{lem:twistdiagram}, we have
   \[ \iota_*(\e_{\ff p^\infty}(w))= \e_{\ff p^\infty}(\iota w).\]
   The action of $\iota_*$ is clearly compatible with corestriction, so we have a commutative diagram
    \begin{diagram}
     H^1_{\Iw,S}(K(\ff p^\infty), W(1)) & \rTo & H^1_{\Iw,S}(K_\infty, W(1))\\
    \dTo^{\iota^*} & & \dTo^{\iota_*}\\
     H^1_{\Iw,S}(K(\ff p^\infty), (\iota W)(1)) & \rTo & H^1_{\Iw,S}(K_\infty, \iota W(1))\\
    \end{diagram}
    which implies that $\iota_*(\e_\infty(w))= \e_\infty(\iota w)$, completing the proof.
  \end{proof}
  
  \begin{lemma}\label{lem:restrictionmap}
   Let $V$ be any $p$-adic representation of $\Gal(K^S / \QQ)$. Then the restriction map induces an isomorphism
   \[ H^1_{\Iw, S}(\QQ_\infty, V) \rTo H^1_{\Iw, S}(K_\infty, V)^{\Gal(K_\infty\slash\QQ_\infty)}. \]
  \end{lemma}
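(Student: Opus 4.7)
The plan is to deduce the lemma from the Hochschild--Serre spectral sequence at each finite layer and then pass to the inverse limit. First, I observe that $\Delta := \Gal(K_\infty/\QQ_\infty)$ has order dividing $2$: if $K \subseteq \QQ_\infty$, then $K_\infty = \QQ_\infty$ and the statement is trivial, while otherwise restriction identifies $\Delta$ with $\Gal(K/\QQ)$. Since $p$ is odd, $|\Delta|$ is a unit in $\Zp$, so the averaging idempotent $e_\Delta = \tfrac{1}{|\Delta|}\sum_{\sigma \in \Delta} \sigma$ lies in $\Zp[\Delta]$; consequently $H^i(\Delta, M) = 0$ for all $i \geq 1$ and every $\Zp[\Delta]$-module $M$.

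Next, fix a $\Gal(K^S/\QQ)$-stable $\Zp$-lattice $T \subset V$ and choose a tower $\QQ_\infty = \bigcup_n \QQ_{\infty,n}$ of finite Galois extensions of $\QQ$, setting $K_n := K \cdot \QQ_{\infty,n}$, so that $K_\infty = \bigcup_n K_n$. For $n$ large enough, restriction identifies $\Gal(K_n/\QQ_{\infty,n})$ with $\Delta$, and the inflation--restriction exact sequence
\[ 0 \to H^1\bigl(\Delta, T^{\Gal(K^S/K_n)}\bigr) \to H^1_S(\QQ_{\infty,n}, T) \to H^1_S(K_n, T)^\Delta \to H^2\bigl(\Delta, T^{\Gal(K^S/K_n)}\bigr) \]
collapses, yielding an isomorphism $H^1_S(\QQ_{\infty,n}, T) \cong H^1_S(K_n, T)^\Delta$ induced by restriction. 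These isomorphisms are compatible with the corestriction maps defining Iwasawa cohomology, by the standard commutativity of restriction and corestriction in Cartesian diagrams of Galois extensions.

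Passing to the inverse limit in $n$, and using that $\Delta$-invariants (the functor $\Hom_{\Zp[\Delta]}(\Zp, -)$) commutes with inverse limits since $\Delta$ is finite, I obtain
\[ H^1_{\Iw,S}(\QQ_\infty, T) \;\cong\; \varprojlim_n H^1_S(K_n, T)^\Delta \;\cong\; \bigl(\varprojlim_n H^1_S(K_n, T)\bigr)^\Delta \;=\; H^1_{\Iw,S}(K_\infty, T)^\Delta. \]
Tensoring with $\Qp$ over $\Zp$ then yields the statement for $V$. The only delicate point is the compatibility of the finite-level restriction isomorphisms with the corestriction transition maps, but this reduces to a standard diagram chase; the essential input — vanishing of higher $\Delta$-cohomology — is automatic from the hypothesis that $p$ is odd.
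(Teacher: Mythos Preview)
Your proof is correct and follows essentially the same approach as the paper: inflation--restriction at each finite layer, vanishing of the higher cohomology of the order-$2$ group $\Gal(K_\infty/\QQ_\infty)$ in characteristic prime to $2$, and passage to the inverse limit. Your version is somewhat more careful than the paper's (working with a lattice $T$ before tensoring with $\Qp$, and making explicit the compatibility with corestriction and the commutation of $\Delta$-invariants with inverse limits), but the underlying argument is the same.
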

  
  \begin{proof}
   The restriction map is induced from the restriction maps on finite level, which fit into the exact sequence
   \begin{multline*} 
    0 \rTo H^1\big(\Gal(K_n\slash \QQ_n), V^{\Gal(K^S\slash K_n)}\big) \rTo H^1_{S}(\QQ_n, V) \\\rTo H^1_{S}(K_n, V)^{\Gal(K_n\slash\QQ_n)} 
        \rTo H^2\big(\Gal(K_n\slash \QQ_n), V^{\Gal(K^S\slash K_n)}\big). 
   \end{multline*}
   Since $\Qp$ has characteristic 0, the higher cohomology groups of any $\Qp$-linear representation of the cyclic group of order 2 are zero. This gives the claim at each finite level, and hence in the inverse limit.
  \end{proof} 
   
   Let $\alpha$ be the unique nontrivial element of $\Gal(K_\infty / \QQ_\infty)$.
   
   \begin{lemma}\label{lem:alphaandiota}
    We have $\alpha= \delta\iota$, where $\delta$ is the unique element of $\Gal(K_\infty\slash K)$ which acts on $\QQ_\infty$ as complex conjugation. In particular, $\delta$ is of order $2$. 
   \end{lemma}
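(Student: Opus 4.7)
The plan is to exploit the splitting hypothesis on $p$ to show that $K$ and $\QQ_\infty$ are linearly disjoint over $\QQ$, so that $\Gal(K_\infty/\QQ)$ decomposes as an internal direct product $\Gal(K_\infty/K) \times \Gal(K_\infty/\QQ_\infty)$. Once this is set up, the identity $\alpha = \delta\iota$ reduces to checking what both sides do on $K$ and on $\QQ_\infty$ separately.

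First I would verify that $K \cap \QQ_\infty = \QQ$. Because $p$ splits in $K$, the extension $K/\QQ$ is unramified at $p$; but every nontrivial subextension of $\QQ_\infty/\QQ$ is ramified at $p$, since $\QQ_\infty/\QQ$ is totally ramified at $p$ (recall $p$ is odd). Hence the intersection $K \cap \QQ_\infty$ is a subfield of $K$ unramified at $p$, so it must be $\QQ$. Consequently the restriction map $\Gal(K_\infty/K) \to \Gal(\QQ_\infty/\QQ)$ is an isomorphism, which immediately shows that $\delta$ is well-defined, unique, and has the same order as complex conjugation on $\QQ_\infty$, namely $2$.

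Next, since $K_\infty = K \cdot \QQ_\infty$ with $K \cap \QQ_\infty = \QQ$, we have $\Gal(K_\infty/\QQ) = \Gal(K_\infty/K) \times \Gal(K_\infty/\QQ_\infty)$, and an element of $\Gal(K_\infty/\QQ)$ is determined by its restrictions to $K$ and to $\QQ_\infty$. On $\QQ_\infty$, $\alpha$ acts trivially by definition; on the other side, $\delta|_{\QQ_\infty}$ is complex conjugation, and $\iota|_{\QQ_\infty}$ is also complex conjugation (since $\iota$ is complex conjugation on all of $\overline{\QQ}$), so $(\delta\iota)|_{\QQ_\infty}$ is the identity. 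On $K$, the element $\alpha$ must act as the nontrivial element of $\Gal(K/\QQ)$, because $\alpha$ is the nontrivial element of $\Gal(K_\infty/\QQ_\infty)$ and restriction to $K$ gives an isomorphism onto $\Gal(K/\QQ)$; meanwhile $\delta$ fixes $K$, and $\iota$ restricts to the nontrivial element of $\Gal(K/\QQ)$, so $(\delta\iota)|_K$ coincides with $\alpha|_K$. Hence $\alpha = \delta\iota$.

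There is really no serious obstacle here: the entire argument rests on the single input $K \cap \QQ_\infty = \QQ$, which is a direct consequence of the splitting hypothesis on $p$ combined with the ramification behaviour of $\QQ_\infty/\QQ$.
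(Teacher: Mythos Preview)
Your argument is correct. The paper actually states this lemma without proof, treating it as an elementary observation; your write-up supplies exactly the straightforward verification one would expect, namely using the splitting of $p$ to get $K \cap \QQ_\infty = \QQ$ and then comparing restrictions to $K$ and to $\QQ_\infty$.
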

   
  \begin{corollary}\label{cor:alphainvariant}
   If $\alpha$ is the unique nontrivial element of $\Gal(K_\infty / \QQ_\infty)$, then for any $w \in W$,
   \[ \alpha_*\left(\e_\infty(w)\right) = \delta \cdot \e_{\infty}(\iota w).\]
  \end{corollary}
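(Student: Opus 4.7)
The plan is to assemble the corollary from the two previous results without any real new work: factor $\alpha$ through $\iota$ via Lemma \ref{lem:alphaandiota}, translate this factorization into a factorization of the induced operators on Iwasawa cohomology, and then apply Theorem \ref{thm:invariant}.

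First I would check, purely from the cocycle formulas, that the assignment $g \mapsto g_{*}$ sending an element of $\Gal(K_\infty/\QQ)$ to its induced map on $H^1_{\Iw,S}(K_\infty,-)$ (extended semilinearly to the $Q(\Gamma)$-localisation) is multiplicative: if $\tau$ is a cocycle and $\alpha=\delta\iota$, then expanding the definition gives
\[
\alpha\,\tau(\alpha^{-1} g \alpha) \;=\; \delta\bigl(\iota\,\tau(\iota^{-1}(\delta^{-1} g \delta)\iota)\bigr),
\]
which is exactly $\delta_{*}$ applied to $\iota_{*}(\tau)$. Hence $\alpha_{*}=\delta_{*}\circ\iota_{*}$ as maps from $H^1_{\Iw,S}(K_\infty,W(1))$ to $H^1_{\Iw,S}(K_\infty,(\iota W)(1))$ (the target is $(\iota W)(1)$ because $\alpha$ and $\iota$ have the same image in $\Gal(K/\QQ)$, so the $\alpha$-twist of $W$ agrees with the $\iota$-twist).

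Next, since $\delta$ lies in $\Gamma = \Gal(K_\infty/K)$, the Iwasawa-module structure identifies $\delta_{*}$ with multiplication by $\delta\in\Lambda(\Gamma)$, and this extends to $Q(\Gamma)\otimes_{\Lambda(\Gamma)} H^1_{\Iw,S}(K_\infty,(\iota W)(1))$. Combining this with the factorization above and Theorem \ref{thm:invariant} yields
\[
\alpha_{*}\bigl(\e_\infty(w)\bigr) \;=\; \delta_{*}\bigl(\iota_{*}(\e_\infty(w))\bigr) \;=\; \delta_{*}\bigl(\e_\infty(\iota w)\bigr) \;=\; \delta\cdot \e_\infty(\iota w),
\]
which is the desired identity.

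There is no real obstacle here; the only point worth handling carefully is the bookkeeping that $\alpha_{*}$, defined via the cocycle formula on finite level and passed to the inverse limit, really does decompose as $\delta_{*}\circ\iota_{*}$, and that $\delta_{*}$ coincides with the $\Lambda(\Gamma)$-action by $\delta$. Both are straightforward once one unwinds the definitions, so the corollary is essentially a one-line consequence of Theorem \ref{thm:invariant} and Lemma \ref{lem:alphaandiota}.
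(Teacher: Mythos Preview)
Your proposal is correct and follows essentially the same route as the paper: factor $\alpha=\delta\iota$ via Lemma~\ref{lem:alphaandiota}, invoke Theorem~\ref{thm:invariant} to obtain $\iota_*(\e_\infty(w))=\e_\infty(\iota w)$, and then observe that $\delta_*$ is just the $\Lambda(\Gamma)$-action by $\delta$. The paper's proof is the same two-line computation; you have simply made the bookkeeping behind $\alpha_*=\delta_*\circ\iota_*$ more explicit.
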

  
  \begin{proof}\relax
   As above, write $\alpha= \delta\iota$. By Lemma \ref{lem:restrictionmap}, we have $\iota^* \cdot e_{\infty}(w) = \e_{\infty}(\iota w)$. Hence $\alpha_*\left(\e_\infty(w)\right) = \delta \cdot \iota_*\left(\e_\infty(w)\right) = \delta \cdot  \e_{\infty}(\iota w)$.
  \end{proof}
    

 \subsection{The two-variable \texorpdfstring{$L$}{L}-function of \texorpdfstring{$K$}{K}}
 
  We recall the construction (originally due to Yager \cite{yager82}) of a two-variable $p$-adic $L$-function from the elliptic units.
  
  Let $\fp$ be one of the two primes of $K$ above $p$. We choose an embedding $\overline{K} \into \Qpb$ inducing the $\fp$-adic valuation on $K$. Then for any finite extension $L / K$, and any $\Gal(\overline{K} / K)$-module $M$, we may define 
  \[ Z^1_\fp(L, M) = \bigoplus_{\fq \mid \fp} H^1(L_\fq, M) = H^1(K_\fp, \Ind_{L}^K M).\]
  which is a $\Gal(L / K)$-module. We also define
  \[ Z^1_{\Iw,\fp}(K(\ff p^\infty), M) = \varprojlim_{L} Z^1_{\fp}(L, M)\]
  where the limit is taken over finite extensions $L / K$ contained in $K(\ff p^\infty)$.
  
  We now recall the theory of two-variable Coleman series, as introduced, under certain additional hypotheses, by Yager \cite{yager82}, and generalized to the semi-local situation here by de Shalit \cite[\S II.4.6]{deshalit87}. Let $\zeta = (\zeta_{p^n})_{n \ge 0}$ be a compatible system of $p$-power roots of unity in $\overline{K}$; and let $\Finf$ be the completion of $K(\ff \fpb^\infty)$ with respect to the prime $\fP$ of $\overline{K}$ above $\fp$ induced by our choice of embedding $\overline{K} \into \Qpb$, and $\Oinf$ the ring of integers of $\Finf$. (Thus $\Oinf$ is a complete discrete valuation ring with maximal ideal generated by $p$, and its residue field is a finite extension of the unique $\Zp$-extension of $\mathbb{F}_p$.) 
  
  \begin{proposition}\label{prop:yagercolemanmap}
   There is a unique morphism of $\Lambda(G_{\ff p^\infty})$-modules
   \[ \Col^\zeta: Z^1_{\Iw,\fp}(K(\ff p^\infty), \Zp(1)) \to \Lambda_{\Oinf}(G_{\ff p^\infty})\]
   with the following property:
  
   For each finite-order character $\eta$ of $G_{\ff p^\infty}$ which is not unramified at $\fp$, we have
   \[ \Col^\zeta(u)(\eta) = \tau(\eta, \zeta)^{-1} \eta(\tilde\vp)^n \left(\sum_{\sigma \in G_{\ff p^m}} \eta(\sigma)^{-1} \log_\fP(u_m^\sigma) \right).\]
   Here $\tilde\vp$ is the unique lifting of the arithmetic Frobenius of $\Gal(K(\ff \fpb^\infty) / K)$ to $\Gal(K(\ff p^\infty) / K_\infty)$, $m$ is any integer such that $\eta$ factors through the quotient $G_{\ff p^m} = \Gal(K(\ff p^m) / K)$, $\log_{\fP}$ is the logarithm map
   \[ \calO_{K(\ff p^n), \fP}^\times \rTo K(\ff p^n)_\fP,\]
   and 
   \[ \tau(\eta, \zeta) = \sum_{\sigma \in \Gal(K(\ff \fpb^\infty)(\mu_{p^n}) / K(\ff \fpb^\infty))} \omega(\sigma)^{-1} \zeta_{p^n}^\sigma,\]
   where $n$ is the exact power of $\fp$ dividing the conductor of $\eta$.
  \end{proposition}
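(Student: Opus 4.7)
The plan is to reduce the statement to the theory of two-variable Coleman power series developed by Yager and de Shalit, which is exactly tailored for this situation. I would split the argument into uniqueness and existence. Uniqueness is the easier half: the set of finite-order characters of $G_{\ff p^\infty}$ that are ramified at $\fp$ is Zariski-dense in the rigid-analytic weight space parametrising continuous characters of $G_{\ff p^\infty}$, so an element of the bounded distribution algebra $\Lambda_{\Oinf}(G_{\ff p^\infty})$ is determined by its evaluations at such characters. Hence the interpolation formula fixes $\Col^\zeta(u)$ uniquely for each $u$, and $\Lambda(G_{\ff p^\infty})$-linearity propagates automatically from the level of characters.

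For existence, the first step is to trade cohomology for local units. Applying Shapiro's lemma one rewrites
\[ Z^1_{\Iw,\fp}(K(\ff p^\infty), \Zp(1)) = \varprojlim_{n} \bigoplus_{\fP \mid \fp} H^1(K(\ff p^n)_{\fP}, \Zp(1)), \]
and Kummer theory identifies each local $H^1$ with the pro-$p$ completion of the local units, so a norm-compatible class $u$ corresponds to a norm-compatible system of local unit data. The field $\Finf$ is the unramified completion attached to the $\fpb$-tower, and $K(\ff p^\infty)_{\fP} / \Finf$ sits inside the cyclotomic $\Zp$-extension of $\Finf$. In this precise setup de Shalit (\cite[\S II.4.6]{deshalit87}), generalising Yager's original one-variable construction, attaches to $u$ a Coleman power series $g_u \in \Oinf[[T]]$ (really a Galois-equivariant collection indexed by the primes above $\fp$) characterised by the Frobenius-twisted interpolation property $\tilde\vp^{-n}(g_u)(\zeta_{p^n} - 1) = u_{n,\fP}$, with the $\tilde\vp$-twist appearing because $g_u$ has coefficients in $\Oinf$ while $u_n$ lives over $K_\fp$.

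The element $\Col^\zeta(u)$ is then defined by taking $\log g_u$ and transporting it to $\Lambda_{\Oinf}(G_{\ff p^\infty})$ via the Mellin--Amice isomorphism identifying $\Oinf[[T]]$ with $\Lambda_{\Oinf}(\Gamma)$, upgraded to the full group $G_{\ff p^\infty}$ using the action of $\tilde\vp$ on the coefficients. The interpolation formula then becomes a direct calculation: for $\eta$ ramified at $\fp$ of conductor exponent $n$, one expands $\sum_{\sigma} \eta(\sigma)^{-1} \log_\fP(u_m^\sigma)$ using the defining property of $g_u$ and recognises it as the evaluation of $\log g_u$ against the character $\eta$ on the cyclotomic inertia component. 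Fourier inversion on that component introduces the Gauss sum $\tau(\eta, \zeta)$ (hence the factor $\tau(\eta,\zeta)^{-1}$ on the right), while the $\tilde\vp$-twist built into $g_u$ produces the factor $\eta(\tilde\vp)^n$.

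The main technical obstacle is the bookkeeping of normalisations: matching the choice of compatible system $\zeta$, the lift $\tilde\vp$, and the Artin reciprocity convention $\theta(\varpi_\fq) = [\fq]^{-1}$ fixed earlier in the paper, so that the explicit formula comes out exactly as stated rather than twisted by an unwanted character or inverse. I would follow de Shalit's construction verbatim for the existence and interpolation, and then verify — by a careful pass through his conventions and those in \cite{yager82} — that the Gauss sum and Frobenius factors line up with ours.
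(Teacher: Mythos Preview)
The paper does not actually give a proof of this proposition: it is stated without a proof environment, as a summary of results from the literature. The sentence immediately preceding the proposition attributes the construction to Yager \cite{yager82} and de Shalit \cite[\S II.4.6]{deshalit87}, and the authors proceed directly to use the map $\Col^\zeta$ in the next definition. So there is nothing in the paper to compare your argument against line by line.

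That said, your sketch is exactly the content of the cited references and is the right way to justify the proposition if one wanted to expand on the paper's citation. Your uniqueness argument via Zariski density of ramified finite-order characters is correct, and your existence argument --- Kummer theory to pass to local units, the de Shalit/Yager Coleman power series $g_u$, then $\log g_u$ transported via the Amice transform, with the Gauss sum and $\eta(\tilde\vp)^n$ arising from Fourier inversion and the Frobenius twist in the coefficients --- is precisely what \cite[\S II.4.6]{deshalit87} does. Your closing caveat about matching normalisations (Artin map convention, choice of $\zeta$, lift $\tilde\vp$) is well placed: this is the only place one can go wrong, and the paper's own later Proposition comparing $\mathbb{L}_{\ff p^\infty}$ with de Shalit's $\mu(\ff\fpb^\infty)$ is where that reconciliation is actually carried out.
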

  
  \begin{definition}
   We let
   \[ \mathbb{L}_{\ff p^\infty} = \Col^\zeta(\e_{\ff p^\infty}) \in \Oinf \mathop{\widehat{\otimes}_{\Zp}} Q(G_{\ff p^\infty}).\]
  \end{definition}

  \begin{proposition}
   The element $\mathbb{L}_{\ff p^\infty}$ lies in $\Lambda_{\Oinf}(G_{\ff p^\infty})$, and it coincides with the measure $\mu(\ff \fpb^\infty)$ in \cite[Theorem II.4.14]{deshalit87}.
  \end{proposition}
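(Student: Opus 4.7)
The plan is to deduce the integrality statement as a consequence of the identification with $\mu(\ff \fpb^\infty)$, which by Theorem II.4.14 of \cite{deshalit87} is already known to lie in $\Lambda_{\Oinf}(G_{\ff p^\infty})$. Both $\mathbb{L}_{\ff p^\infty}$ and $\mu(\ff \fpb^\infty)$ live a priori in $\Oinf \mathop{\widehat{\otimes}_{\Zp}} Q(G_{\ff p^\infty})$, and I would show they agree by computing values at a Zariski-dense set of finite-order characters of $G_{\ff p^\infty}$. Since a priori the characters of $G_{\ff p^\infty}$ at which a non-integral element of $Q(G_{\ff p^\infty})$ can be evaluated form a Zariski-open subset, the comparison has to be done over a sufficiently rich such subset rather than literally over all finite-order characters.

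Concretely, I would combine the definition $\e_{\ff p^\infty} = (N\fa - [\fa])^{-1} \ae_{\ff p^\infty}$ with the $\Lambda(G_{\ff p^\infty})$-linearity of $\Col^\zeta$ (which is unambiguous on the integral class $\ae_{\ff p^\infty}$) and evaluate at finite-order characters $\eta$ that are (i) ramified at $\fp$, so Proposition \ref{prop:yagercolemanmap} applies, and (ii) satisfy $\eta([\fa]) \neq N\fa$, so the formal quotient specialises sensibly. The second condition can be arranged for a cofinal family of $\fa$, and together the two conditions exclude only a Zariski-closed proper subset of characters. Substituting Proposition \ref{prop:yagercolemanmap} gives
\[ \mathbb{L}_{\ff p^\infty}(\eta) \;=\; \frac{\tau(\eta,\zeta)^{-1}\, \eta(\tilde\vp)^n}{N\fa - \eta([\fa])} \sum_{\sigma \in G_{\ff p^m}} \eta(\sigma)^{-1} \log_\fP\bigl(\ae_{\ff p^m}^\sigma\bigr), \]
an expression which is visibly independent of the auxiliary $\fa$, as it should be.

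Next I would compare this formula with the interpolation formula that characterises $\mu(\ff \fpb^\infty)$ in Theorem II.4.14 of \cite{deshalit87}. Both expressions have exactly the same shape: a Gauss-sum factor, a Frobenius-twist factor, and a finite weighted sum of $\log_\fP$ of elliptic units, divided by $(N\fa - \eta([\fa]))$ to remove the dependence on the twist. The identification then reduces to verifying that the normalisations coincide: the direction of the Artin reciprocity map, the Frobenius lift $\tilde\vp$, the precise Gauss sum $\tau(\eta,\zeta)$ and its $\omega^{-1}$-twist, and the elliptic unit convention. Once matched, agreement on a Zariski-dense set of characters forces $\mathbb{L}_{\ff p^\infty} = \mu(\ff \fpb^\infty)$ in $\Oinf \mathop{\widehat{\otimes}_{\Zp}} Q(G_{\ff p^\infty})$, and integrality of $\mathbb{L}_{\ff p^\infty}$ is then immediate.

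The main obstacle is not conceptual but is the careful bookkeeping of these conventions. In particular, reconciling our Gauss sum $\tau(\eta,\zeta)$ with the one implicit in de Shalit's Coleman-type construction, and matching our lift $\tilde\vp$ with his Frobenius element, requires some care; de Shalit works with $\fpb$-primary towers twisted at $\fp$, which is the mirror side of our setup and forces a sign-type verification at the level of decomposition groups. Once these conventions are aligned, the two interpolation formulas agree term by term and the proposition follows.
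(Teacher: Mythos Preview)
Your approach is correct in outline, and the comparison step---matching interpolation formulas at finite-order characters ramified at $\fp$---is exactly what the paper does. The main difference is the order of the two claims. You deduce integrality \emph{from} the identification with $\mu(\ff\fpb^\infty)$, working in $Q(G_{\ff p^\infty})$ and appealing to Zariski density. The paper instead proves integrality first, by a direct algebraic argument: since $(N\fa-[\fa])\cdot\mathbb{L}_{\ff p^\infty}=\Col^\zeta(\ae_{\ff p^\infty})$ lies in $\Lambda_{\Oinf}(G_{\ff p^\infty})$ for every admissible $\fa$, and the ideal generated by all the $(N\fa-[\fa])$ has height $2$ in $\Lambda(G_{\ff p^\infty})$, one gets $\mathbb{L}_{\ff p^\infty}\in\Lambda_{\Oinf}(G_{\ff p^\infty})$ immediately (this is the argument of \cite[\S II.4.12]{deshalit87}). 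Only then does the paper compare two \emph{integral} measures, which removes the need to worry about poles or Zariski-open loci. Your route is valid but forces you to track where evaluation makes sense; the height-$2$ trick sidesteps this entirely.

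One further remark: for the comparison itself the paper invokes de~Shalit's explicit interpolation formula from \cite[Theorem II.5.2]{deshalit87} (rather than II.4.14, which is the construction), and the matching of conventions is carried out concretely via Robert's invariant $\phi_{\fg}(\fc)$ and the identity $(N\fa-[\fa])\phi_{\fg}(\fc)=[\fc]\cdot(\ae_{\fg})^{-12g}$. Your sketch of the bookkeeping is accurate in spirit, but when you actually do it you will find the Robert-invariant bridge is what makes the elliptic-unit normalisations line up cleanly.
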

  
  \begin{proof}
   We have $(N\fa - [\fa]) \cdot \mathbb{L}_{\ff p^\infty} \in \Lambda_{\Oinf}(G_{\ff p^\infty})$ for all $\fa$. Since the ideal generated by $N\fa - [\fa]$ for all integral ideals $\fa$ coprime to $6\ff$ has height 2, this implies that $\mathbb{L}_{\ff p^\infty} \in \Lambda_{\Oinf}(G_{\ff p^\infty})$ (cf.~\cite[\S II.4.12]{deshalit87}).
   
   To show that the resulting measure coincides with de Shalit's $\mu(\ff \fpb^\infty)$, we compare the defining property of the map $\Col$ above with \cite[Theorem II.5.2]{deshalit87}. For a finite-order character $\eta$ of $G_{\ff p^n}$, whose conductor $\fg$ is divisible by $\fp$ and satisfies $\calO_K^\times \cap (1 + \fg) = \{1\}$, de Shalit shows that
   \[ \eta(\mu(\ff \fpb^\infty)) = \frac{-1}{12 g} G(\eta) \sum_{\fc \in \operatorname{Cl}(\fg)} \eta^{-1}([\fc]) \log \phi_{\fg}(\fc),\]
   where $g$ is the smallest rational integer in $\fg$, $\phi_{\fg}(\fc)$ is Robert's invariant and the quantity $G(\eta)$ coincides with what we have called $\tau(\eta, \zeta)^{-1} \eta(\tilde\vp)^n$. Since
   \[   (N(\fa) - [\fa])\phi_{\fg}(\fc) = [\fc] \cdot \left(\ae_\fg\right)^{-12g},\]
   this shows that the two measures coincide at every finite-order character, and hence they are equal in $\Lambda_{\Oinf}(G_{\ff p^\infty})$.
  \end{proof}
  
  \begin{note}
   If one identifies $G(\ff p^\infty)$ with the ray class group modulo $\ff p^\infty$ via the Artin map, normalized as in \S \ref{sect:gc} above, then this measure coincides with the pullback of the Katz two-variable $L$-function of $K$ (cf.~\cite[\S 4]{hidatilouine93}) up to a difference of signs. This remark will be important in the proof of Theorem \ref{thm:maintheorem} below.
  \end{note}

 \subsection{Kato's zeta element}

  Let $f=\sum a_n q^n$ be a modular form of CM type, corresponding to a Gr\"ossencharacter $\psi$ of $K$ with infinity-type $(1-k, 0)$ where $k$ is the weight of $f$. It is clear that the coefficient field $F=\QQ(a_n:n\ge 1)$ of $f$ is contained in the finite extension $L / K$ contained in $\CC$ generated by $\psi(\widehat{K}^\times)$.
  
  Following \cite[\S6.3]{kato04}, we write $S(f)$ and $V(f)$ for the subspaces of the de Rham and Betti cohomology of the Kuga--Sato variety attached to $f$. Note that both of these are $F$-vector spaces, and $S(f)$ is $1$-dimensional over $F$ while $V(f)$ is 2-dimensional. For a commutative ring $A$ over $F$, define $S_A(f)=S(f)\otimes_{F}A$ and $V_A(f)=V(f)\otimes_{F}A$. If $\lambda$ is a place of $F$ above $p$, we may identify $V_{F_\lambda}(f)$ with the $p$-adic representation associated to $f$ of Deligne \cite{deligne69} and $S_{F_\lambda}(f)$ may be identified with $\Fil^1\Dcris(V_{F_\lambda}(f))$.

  \begin{definition}
   Let $\chi$ be a Dirichlet character of conductor $p^n$. We define the maps $\theta_{\chi,f}^\pm$ by
   \[
   \fullfunction{\theta_{\chi,f}^\pm}
    {S(f)\otimes_\QQ\QQ(\mu_{p^n})} 
    {V_{\CC}(f)^\pm} 
    {x\otimes y}
    {\sum_{\sigma\in G_n}\chi(\sigma)\sigma(y)\per_{f}(x)^\pm}
   \]
   where $G_n=\Gal(\QQ(\mu_{p^n})/\QQ)$, $\per_f:S(f)\rTo V_\CC(f)$ is the period map as defined in \cite[\S6.3]{kato04} and $\gamma\mapsto\gamma^\pm$ is the projection from $V_{\CC}(f)$ to its ($1$-dimensional) $\pm1$-eigenspace for the complex conjugation.
  \end{definition}

  \begin{theorem}[{\cite[Theorem~12.5(1)]{kato04}}]\label{thm:interpolationkato}
   We have a $L_\lambda$-linear map 
   \[
    \function{V_{L_\lambda}(f)}{H^1_{\Iw,S}(\QQ_\infty,V_\lambda(f))}{\gamma}{\kato_{\gamma}}
   \]
   which satisfies the following. Let $\chi$ be a Dirichlet character of conductor $p^n$, $\gamma\in V_{L}(f)$ and $1\le r \le k-1$, then
   \[
    \theta_{\chi,f}^\pm\circ\exp^*\left(\kato_\gamma\otimes(\zeta_{p^n})^{\otimes(k-r)}\right)=(2\pi i)^{k-r-1} L_{\{p\}}(f^*,\chi,r)\cdot\gamma^\pm
   \]
   where $\pm=(-1)^{k-r-1}\chi(-1)$.
  \end{theorem}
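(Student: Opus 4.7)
This is Kato's theorem, and any proof plan amounts to reproducing the strategy of \cite{kato04} in three stages: construct the zeta elements from Siegel units, compute their dual exponentials via an explicit reciprocity law, and identify the output with the claimed $L$-value.

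For the construction I would start with the Siegel units $g_{\alpha,\beta}$ on the modular curves $Y(M)$ together with their norm-compatibility as $M$ varies. Taking Steinberg symbols $\{g_{\alpha_1,\beta_1}, g_{\alpha_2,\beta_2}\}$ in $K_2\bigl(Y(M)\bigr)$ and applying the motivic-to-\'etale Chern class map produces classes in $H^2_{\et}(Y(M),\Zp(2))$; passing to the tower $\{Y_1(Np^n)\}_n$ and combining the Hochschild--Serre spectral sequence with Shapiro's lemma gives elements of $H^1_{\Iw,S}\bigl(\QQ_\infty, H^1_{\et}(\overline{Y_1(N)},\Zp)(1)\bigr)$. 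Projecting onto the $f$-isotypic component of the \'etale cohomology of the Kuga--Sato variety attached to $f$ (which involves the symmetric $(k-2)$-nd power of the relative Tate module of the universal elliptic curve over $Y_1(N)$) and contracting with $\gamma \in V_{L_\lambda}(f)$ yields $\kato_\gamma$. The $L_\lambda$-linearity in $\gamma$ is built into this last contraction.

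The interpolation formula is then the content of Kato's \emph{explicit reciprocity law} (\cite[Thm.~16.6]{kato04}), which identifies $\exp^*\bigl(\kato_\gamma \otimes (\zeta_{p^n})^{\otimes (k-r)}\bigr)$ with a concrete de Rham class arising from a weight-$k$ Eisenstein series on $Y_1(Np^n)$, via the syntomic description of $\exp^*$. Applying $\theta_{\chi,f}^\pm$ --- essentially a period pairing between this Eisenstein class and the complex realization of $f$ --- collapses the expression into a Rankin--Selberg-type integral that computes the Dirichlet series for $L_{\{p\}}(f^*,\chi,r)$, with the transcendental factor $(2\pi i)^{k-r-1}$ arising from the standard periods of the Eisenstein series and the sign $\pm = (-1)^{k-r-1}\chi(-1)$ tracking the action of complex conjugation on the relevant eigenspace.

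The main obstacle is the explicit reciprocity law: expressing $\exp^*$ of an \'etale symbol $\{g,h\}$ as a de Rham class requires substantial $p$-adic Hodge theory (Fontaine's period rings, the syntomic or $(\varphi,\Gamma)$-module description of $\exp^*$, and delicate tracking of Tate twists). The subsequent $L$-value computation, while classical in spirit, must be carried out with exact control of the Euler factor at $p$ (whence the subscript $\{p\}$), the precise power of $2\pi i$, and the sign; these precise normalizations are exactly what make the theorem usable in the subsequent comparison with the Katz two-variable $L$-function.
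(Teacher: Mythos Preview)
The paper does not prove this theorem at all: it is stated with the attribution \cite[Theorem~12.5(1)]{kato04} and then used as a black box, so there is no ``paper's own proof'' to compare your proposal against. Your sketch is a reasonable high-level outline of Kato's strategy in \cite{kato04} (Siegel-unit zeta elements, explicit reciprocity law, Rankin--Selberg identification of the $L$-value), and nothing more is expected here since the authors are simply importing the result.

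One small correction on your internal references: the explicit reciprocity step you attribute to \cite[Thm.~16.6]{kato04} is really the content of Kato's Chapter III (the generalized explicit reciprocity law, culminating around \S 10), while \S 16 is where Kato packages the outcome as the $p$-adic $L$-function; the interpolation formula in the form quoted here is assembled in \S 12. This does not affect the correctness of your outline, only the pointers.
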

  
  Let $\ff$ be an ideal of $\calO_K$ satisfying the conditions in Theorem~\ref{thm:cmpair} which is contained in the conductor of $\psi$. Let $(E,\alpha)$ be the canonical CM-pair over $K(\ff)$. Following \cite[\S15.8]{kato04}, we define $V_L(\psi)=H^1(E(\CC),\QQ)^{\otimes(k-1)}\otimes_K L$ and $S(\psi)=H^0(\Gal(K(\ff)/K),\coLie(E)^{\otimes(k-1)}\otimes_K L)$, where the action of $\Gal(K(\ff)/K)$ on the space $\coLie(E)^{\otimes(k-1)}\otimes_K L$ is as described in \emph{op.cit.}. Both of these are $1$-dimensional $L$-vector spaces. For any commutative ring $A$ over $L$, we write $V_A(\psi)=V_L(\psi)\otimes_LA$ and $S_A(\psi)=S(\psi)\otimes_LA$. The Galois group $\Gal(\overline{K}/K)$ acts on $V_L(\psi)\otimes_{L}L_{\lambda}$ via $\psi_\lambda$, and there exists a period map 
  \[
   \per_\psi:S(\psi)\rTo V_{\CC}(\psi)
  \]
  induced by passing to the $(k-1)$-st tensor power from the comparison isomorphism $\per_\infty$ described above.
  
  We now recall Kato's results on the relation between this zeta element and the elliptic units.

  \begin{lemma}[{\cite[Lemma 15.11]{kato04}}]\label{lem:katoisomorphisms} Fix a choice of isomorphism of $L$-vector spaces
  \[ s: S(\psi)\rTo^\sim S_L(f).\]
   \begin{itemize}
    \item[(a)] There exists a unique isomorphism of representations of $\Gal(\overline{\QQ}/\QQ)$ over $L_\lambda$
    \[
     \widetilde{V_{L_\lambda}(\psi)}\rTo V_{L_\lambda}(f)
    \]
    such that the isomorphism $S_{L_\lambda}(\psi)\rTo S_{L_\lambda}(f)$ induced by the functoriality of $\DD_{\dR}$ is compatible with $s$.
    \item[(b)] There exists a unique isomorphism of representations of $\Gal(\CC/\RR)$ over $L$
    \[
     \widetilde{V_{L}(\psi)}\rTo V_{L}(f)
    \]
    for which the diagram
    \[
     \begin{diagram}
      S(\psi) & \rTo^{\per_\psi} & \widetilde{V_{\CC}(\psi)}\\
      \dTo & & \dTo\\
      S_L(f) & \rTo^{\per_{f}} & V_{\CC}(f)
     \end{diagram}
    \]
    commutes. 
   \end{itemize}
  \end{lemma}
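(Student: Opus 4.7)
My overall strategy is uniform for both parts: first, invoke the classical identification of $V(f)$ with an appropriate 2-dimensional extension of $V(\psi)$; second, show this identification is unique up to a scalar; third, rescale to match $s$ on the Hodge piece.

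For part (a), by the classical theory of CM modular forms (Shimura, Ribet, Deligne), $V_{L_\lambda}(f)$ is isomorphic as a $\Gal(\overline{\QQ}/\QQ)$-representation to the induced representation $\Ind_{\Gal(\overline{\QQ}/K)}^{\Gal(\overline{\QQ}/\QQ)} V_{L_\lambda}(\psi)$, which we take as the definition of $\widetilde{V_{L_\lambda}(\psi)}$. Since $\psi$ has infinity-type $(1-k, 0)$ with $k \ge 2$, the Galois character $\psi_\lambda$ and its conjugate by the nontrivial element of $\Gal(K/\QQ)$ are distinct; hence the restriction of $V_{L_\lambda}(f)$ to $\Gal(\overline{\QQ}/K)$ is a sum of two non-isomorphic characters. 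By Schur's lemma, the endomorphism algebra of $V_{L_\lambda}(f)$ as a $\Gal(\overline{\QQ}/\QQ)$-module is exactly $L_\lambda$, so any two $\Gal(\overline{\QQ}/\QQ)$-equivariant isomorphisms $\widetilde{V_{L_\lambda}(\psi)} \to V_{L_\lambda}(f)$ differ by a scalar in $L_\lambda^\times$.

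Applying Fontaine's functor $\DD_{\dR}$, such a Galois isomorphism induces an isomorphism of filtered modules. Both sides are de Rham at $p$ with Hodge--Tate weights $\{0, k-1\}$, and the $\Fil^1$ pieces are $S(\psi) \otimes_L L_\lambda$ and $S_{L_\lambda}(f)$ respectively. The induced $L_\lambda$-linear isomorphism between these 1-dimensional $L_\lambda$-vector spaces differs from $s \otimes \id_{L_\lambda}$ by a unique scalar in $L_\lambda^\times$; rescaling the Galois isomorphism by the inverse of this scalar gives the claim in (a).

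For part (b), the strategy is analogous but over $L$ with $\Gal(\CC/\RR)$-actions. The crucial input is Theorem \ref{thm:cmpair}: the canonical CM elliptic curve $E$ admits a model over $K(\ff)^+ \subset \RR$, which endows $H^1(E(\CC), \QQ)$ with a natural complex-conjugation action and gives $\widetilde{V_L(\psi)}$ the structure of a $\Gal(\CC/\RR)$-representation over $L$. An isomorphism $\widetilde{V_L(\psi)} \cong V_L(f)$ of $\Gal(\CC/\RR)$-modules is produced by the CM-point construction, namely by looking at the map on $f$-isotypic Betti cohomology induced by the point $P_\ff \in Y_1(N)(K(\ff)^+)$. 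Uniqueness up to an $L^\times$-scalar follows by the same Schur-type argument. The commutativity of the period diagram, restricted to the 1-dimensional $L$-space $S(\psi)$, then reduces to a single equation in $V_\CC(f)$ that uniquely fixes the scalar.

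The main technical obstacle is the precise formulation of $\widetilde{V_L(\psi)}$ in part (b): unlike the $p$-adic setting where the induction to $\Gal(\overline{\QQ}/\QQ)$ is immediate, the Betti setting requires carefully tracking the interaction between the $K$-linear CM structure on $H^1(E, \QQ)^{\otimes(k-1)}$ and the complex conjugation arising from the $\RR$-model of $E$, which acts $K$-semilinearly rather than $K$-linearly. This mirrors the semilinearity phenomena captured by the definition of $\iota W$ earlier in the paper. Once this bookkeeping is in place, the existence and uniqueness arguments proceed as described.
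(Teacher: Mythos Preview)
The paper does not give its own proof of this lemma: it is quoted verbatim as \cite[Lemma 15.11]{kato04}, with no proof environment in the text. So there is nothing in the paper to compare your argument against; any assessment has to be on the merits of your sketch alone.

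Your argument for (a) is sound. The induced representation is irreducible because $\psi_\lambda$ and $\psi_\lambda^\iota$ have different Hodge--Tate weights and hence are distinct characters; Schur then gives $\End_{\Gal(\overline{\QQ}/\QQ)} V_{L_\lambda}(f) = L_\lambda$, and the $\Fil^1$ compatibility fixes the remaining scalar.

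Your argument for (b) has a gap at the ``same Schur-type argument'' step. The group $\Gal(\CC/\RR)$ is cyclic of order $2$, and over $L$ (which contains $\QQ$) both $\widetilde{V_L(\psi)}$ and $V_L(f)$ decompose as $L^+ \oplus L^-$ under complex conjugation; the endomorphism ring of such a representation is $L \times L$, not $L$. So $\Gal(\CC/\RR)$-equivariant isomorphisms form a torsor under $(L^\times)^2$, not $L^\times$, and Schur alone does not reduce the ambiguity to a single scalar. What actually pins down both scalars is the period condition: the image $\per_\psi(\omega)$ of a basis vector $\omega \in S(\psi)$ has nonzero projection to each of the $\pm$-eigenspaces in $\widetilde{V_\CC(\psi)}$ (the complex period of a CM elliptic curve is neither real nor purely imaginary), so the single vector equation $\per_f(s(\omega)) = \phi(\per_\psi(\omega))$ in $V_\CC(f)$ determines both components of $\phi$. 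You should replace the Schur appeal with this observation.
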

  
  Note that the isomorphism of part (b) implies an isomorphism $V_{L_\lambda}(\psi) \stackrel{\cong}{\to} V_{L_\lambda}(f)$ on extending scalars to $L_\lambda$, but one does not know that this coincides with the isomorphism of part (a), as remarked in \cite[\S 15.11]{kato04}.
  
  \begin{definition}
   We write $\Phi_{\psi,f}$ for the canonical map
   \[
    H^1_{\Iw,S}(K(\ff p^\infty),V_{L_\lambda}(\psi))\rTo H^1_{\Iw,S}(\QQ_\infty,V_{L_\lambda}(f))
   \]
   as defined in \cite[(15.12.1)]{kato04}.
  \end{definition}
  
  Concretely, this map can be defined as follows:
  \[
   \begin{split}
    H^1_{\Iw,S}(K(\ff p^\infty),V_{L_\lambda}(\psi))\rTo  H^1_{S}(K, \Lambda^\sharp(\Gamma)\otimes V_{L_\lambda}(\psi)) \rTo  \\
    H^1_{S}(\QQ, \Ind_{K}^\QQ \left(\Lambda^\sharp(\Gamma)\otimes V_{L_\lambda}(\psi)\right))
    \rTo^\cong H^1_{S}(\QQ, \Lambda^\sharp(\Gamma) \otimes V_{L_\lambda}(f)).
   \end{split}
  \]
  
  \begin{theorem}\label{cor:compat}
   Let $\gamma\in V_L(\psi)$ and write $\gamma'$ for its image in $V_L(f)$ under the map given by Lemma~\ref{lem:katoisomorphisms}(b). Then we have
   \[
    \Phi_{\psi,f}\left(\e_{\infty}(\gamma)\otimes(\zeta_{p^n})^{\otimes(-1)}\right)=\kato_{\gamma'}.
   \]
  \end{theorem}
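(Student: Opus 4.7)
My plan is to verify the equality by showing that both sides have the same image under the dual-exponential maps $\theta_{\chi,f}^\pm \circ \exp^*$ of Theorem~\ref{thm:interpolationkato}, for every Dirichlet character $\chi$ of $p$-power conductor and every integer $r$ with $1 \le r \le k-1$. Since these maps separate elements of $H^1_{\Iw,S}(\QQ_\infty, V_{L_\lambda}(f))$ — a fact that underlies Kato's characterisation of $\kato_{\gamma'}$ — an identity at every interpolation point is enough. The statement is essentially \cite[Thm.~15.14]{kato04} in our conventions; the real task is to check that, under the normalisations fixed in this paper, the formula holds on the nose rather than up to an ambiguous scalar.

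For the left-hand side I would unfold $\Phi_{\psi,f}$ according to the recipe recalled just above the theorem: Shapiro's lemma plus induction from $K$ to $\QQ$ turns the problem into a local one at the two primes of $K$ above $p$. The contribution at $\fp$ is computed by Proposition~\ref{prop:yagercolemanmap} combined with the identification of $\Col^\zeta(\e_{\ff p^\infty})$ with (a normalisation of) the Katz two-variable measure; evaluated at a finite-order character $\chi$ of $\Gamma$, this produces a critical Hecke $L$-value attached to $\psi\chi$. The contribution at $\fpb$ is handled via Theorem~\ref{thm:invariant} and Corollary~\ref{cor:alphainvariant}, which let me transport the calculation by complex conjugation back to the $\fp$-side, producing the analogous Hecke $L$-value for the conjugate character $\psi^c\chi$.

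Combining these, the Artin formalism $V_{L_\lambda}(f) \cong \Ind_{K}^{\QQ} V_{L_\lambda}(\psi)$ yields the factorisation $L_{\{p\}}(f^*,\chi,r) = L_{\{\fp\}}(\psi^{-1}\chi^{-1},r)\, L_{\{\fpb\}}((\psi^c)^{-1}\chi^{-1},r)$, which matches the left-hand computation with Kato's right-hand formula, Lemma~\ref{lem:katoisomorphisms}(b) being precisely what makes $\gamma$ and $\gamma'$ correspond with matching periods on the two sides. The hard part will be the careful bookkeeping of all the normalisations: the Gauss-sum factors $\tau(\eta,\zeta)$ and powers of the Frobenius lift $\tilde\vp$ appearing in Yager's formula must conspire with the factor $(2\pi i)^{k-r-1}$, with the twist by $(\zeta_{p^n})^{\otimes(-1)}$ in the statement, and with the sign $\pm = (-1)^{k-r-1}\chi(-1)$ on the right, so that no spurious scalar remains. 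Any slip at this level is invisible at the character-by-character stage but would corrupt the global identity, so this bookkeeping is where I expect the main work to lie.
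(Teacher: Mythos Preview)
Your high-level strategy---show that both sides have identical images under $\theta_{\chi,f}^\pm\circ\exp^*$ for all $\chi,r$, and invoke the fact that these data determine an element of $H^1_{\Iw,S}(\QQ_\infty,V_{L_\lambda}(f))$---is exactly the argument the paper records (it simply cites \cite[(15.16.1)]{kato04} and the uniqueness principle). Where your proposal goes astray is in the choice of input for the elliptic-unit side.

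The maps $\theta_{\chi,f}^\pm\circ\exp^*$ land in $V_\CC(f)^\pm$: they are built from the \emph{complex} period map $\per_f$, and Theorem~\ref{thm:interpolationkato} characterises $\kato_{\gamma'}$ by \emph{complex} $L$-values. To match this on the other side you need the complex-analytic interpolation formula for the elliptic-unit zeta element, i.e.\ the analogue of Theorem~\ref{thm:interpolationkato} for $\psi$ proved in \cite[Thm.~15.9]{kato04} (ultimately the Kronecker limit formula and its higher-weight generalisations). Proposition~\ref{prop:yagercolemanmap} is the wrong tool: it computes $\Col^\zeta(u)(\eta)$ as a sum of $p$-adic logarithms $\log_\fP(u_m^\sigma)$, and the identification of $\Col^\zeta(\e_{\ff p^\infty})$ with the Katz measure is a statement about $p$-adic distributions. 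Neither of these produces a complex Hecke $L$-value when evaluated at a finite-order character of $\Gamma$; the Katz measure interpolates complex $L$-values only at algebraic Hecke characters in the critical range, and then only after dividing by a complex period and multiplying by a $p$-adic one. So the step ``evaluated at a finite-order character $\chi$ of $\Gamma$, this produces a critical Hecke $L$-value'' does not follow from the cited proposition.

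Relatedly, Theorem~\ref{thm:invariant} and Corollary~\ref{cor:alphainvariant} are not needed here. They encode the complex-conjugation symmetry of $\e_\infty$ and are used \emph{after} the present theorem, to analyse the two local pieces of $\loc_p(\kato_{\gamma'})$ when computing the critical-slope $L$-function; they play no role in establishing the global identity $\Phi_{\psi,f}(\e_\infty(\gamma)\otimes e_{-1})=\kato_{\gamma'}$ itself. Once you replace Proposition~\ref{prop:yagercolemanmap} by \cite[Thm.~15.9]{kato04}, the split into $\fp$ and $\fpb$ contributions becomes unnecessary: one compares Kato's two interpolation formulae directly, invokes the factorisation $L_{\{p\}}(f^*,\chi,r)=L_{\{\fp,\fpb\}}(\psi^{-1},\chi,r)$ together with the period compatibility of Lemma~\ref{lem:katoisomorphisms}(b), and the bookkeeping you were worried about is precisely what Kato carries out in \cite[\S15.13--15.16]{kato04}.
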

  
  \begin{proof}
   This is \cite[(15.16.1)]{kato04}; it is immediate from a comparison the interpolating properties of the two zeta elements, since an element of $H^1_{\Iw}(\QQ_\infty / \QQ, V_{L_\lambda}(f))$ is uniquely determined by its images under the dual exponential maps at each finite level in the tower $\QQ_\infty / \QQ$.
  \end{proof}
  
  \begin{proposition}\label{prop:mackey}
   We have a commutative diagram
   \[ 
    \begin{diagram}
     H^1_{\Iw, S}(K_\infty, V_{L_\lambda}(\psi)) & \rTo^{\Phi_{\psi, f}} & H^1_{\Iw,S}(\QQ_\infty,V_{L_\lambda}(f))\\
     \dTo& \ldTo^\cong\\
     H^1_{\Iw, S}(K_\infty, V_{L_\lambda}(\psi) \oplus \iota V_{L_\lambda}(\psi))^{\alpha = 1} & 
    \end{diagram}
   \]
   where the left-hand vertical map sends $x$ to $x \oplus \delta \cdot \iota_*(x)$, and the diagonal isomorphism is given by restriction.
  \end{proposition}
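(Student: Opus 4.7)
The plan is to reduce the statement to standard compatibilities between Shapiro's lemma, restriction, and Mackey's formula. Since $f$ has CM by $\psi$, its Galois representation satisfies $V_{L_\lambda}(f) \cong \Ind_{\Gal(\overline{\QQ}/K)}^{\Gal(\overline{\QQ}/\QQ)} V_{L_\lambda}(\psi)$. Applying Mackey's restriction formula to the index-two subgroup $\Gal(\overline{\QQ}/K) \subset \Gal(\overline{\QQ}/\QQ)$, with nontrivial coset representative given by complex conjugation $\iota$, yields a canonical decomposition $V_{L_\lambda}(f)|_{\Gal(\overline{\QQ}/K)} \cong V_{L_\lambda}(\psi) \oplus \iota V_{L_\lambda}(\psi)$. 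Combined with Lemma~\ref{lem:restrictionmap} applied to $V_{L_\lambda}(f)$, this identifies $H^1_{\Iw,S}(\QQ_\infty, V_{L_\lambda}(f))$ with the $\alpha$-invariants of $H^1_{\Iw,S}(K_\infty, V_{L_\lambda}(\psi) \oplus \iota V_{L_\lambda}(\psi))$ via restriction, establishing that the diagonal arrow is indeed an isomorphism.

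For commutativity, the concrete description given just before the proposition shows that $\Phi_{\psi,f}$ is Shapiro's isomorphism for the inclusion $\Gal(\overline{\QQ}/K) \subset \Gal(\overline{\QQ}/\QQ)$ with coefficients $\Lambda^\sharp(\Gamma) \otimes V_{L_\lambda}(\psi)$. The standard fact that, for an index-two subgroup, composing Shapiro's isomorphism with restriction and projecting onto the original factor of the Mackey decomposition recovers the identity then implies that the $V_{L_\lambda}(\psi)$-component of $\res \circ \Phi_{\psi,f}(x)$ equals $x$. Because this restricted class is $\alpha$-invariant, its $\iota V_{L_\lambda}(\psi)$-component is determined by the action of $\alpha_*$ on the Mackey decomposition. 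Using Lemma~\ref{lem:alphaandiota} ($\alpha = \delta\iota$) together with the cocycle-level definition of $\iota_*$ from Theorem~\ref{thm:invariant}, a direct check shows that $\alpha_*$ exchanges the two summands with an additional twist by $\delta$, so $\alpha$-invariance of a pair $(x, y)$ forces $y = \delta \cdot \iota_*(x)$, which is precisely the formula asserted.

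The main obstacle is this last step: verifying that $\alpha_*$ acts on the Mackey decomposition by swapping the two summands via $\delta \iota_*$. This requires choosing a lift $\tilde\alpha \in \Gal(\overline{\QQ}/\QQ_\infty)$ of $\alpha$, tracking its effect in interchanging the two factors of the induced representation, and reconciling the resulting automorphism of Iwasawa cohomology with the $\Lambda(\Gamma)$-linear extension of $\iota_*$ by the element $\delta$. The $\delta$-factor is unavoidable: the tacit identification $\Gal(K_\infty/K) \cong \Gamma = \Gal(\QQ_\infty/\QQ)$ is not $\iota$-equivariant, and this discrepancy is recorded precisely by Lemma~\ref{lem:alphaandiota}.
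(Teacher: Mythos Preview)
Your argument is correct, and it is essentially the unpacking of what the paper intends: the paper's own proof of this proposition consists of the single word ``Clear.'' You have supplied the standard Shapiro/Mackey justification that the authors evidently considered routine enough to omit. The only remark worth adding is that your final paragraph, on the action of $\alpha_*$ on the Mackey decomposition, is exactly the content encoded in Corollary~\ref{cor:alphainvariant} (which is proved in the paper by the same $\alpha = \delta\iota$ manoeuvre you describe), so the verification you flag as the ``main obstacle'' is already carried out elsewhere in the paper.
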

  
  \begin{proof} 
   Clear. 
  \end{proof}

        
 \section{Critical-slope \texorpdfstring{$L$}{L}-functions}
  
  Let $f$ be a modular form of CM type, as above, and $\psi$ the corresponding Gr\"ossencharacter. We choose a basis $\gamma$ of $V_L(\psi)$, and let $\gamma'$ be its image in $V_L(f)$ under the isomorphism of Lemma \ref{lem:katoisomorphisms}(b).
  
  We fix an embedding $\overline{K} \into \Qpb$ which induces the $\lambda$-adic valuation on $L$. This gives an embedding $\Gal(\Qpb / \Qp) \into \Gal(\overline{K} / \QQ)$, whose image is contained in the subgroup $\Gal(\overline{K} / K)$. This gives a localization map
  \[ \loc_p: H^1_{\Iw, S}(\QQ_\infty, M) \rTo H^1_{\Iw}(\QQ_{p,\infty}, M)\]
  for each $\Gal(K^S / \QQ)$-module $M$. Moreover, we have a map
  \[ \loc_{\fp} : H^1_{\Iw, S}(K_\infty, M) \rTo H^1_{\Iw}(\QQ_{p,\infty}, M)\]
  for each $\Gal(K^S / K)$-module $M$, and we clearly have $\loc_{p} = \loc_{\fp} \circ \res_{K / \QQ}$.
  
  Via the isomorphism of Lemma \ref{lem:katoisomorphisms}(a), the space $V_{L_\lambda}(f)$ is isomorphic as a representation of $\Gal(\Qpb / \Qp)$ to $V_{L_\lambda}(\psi) \oplus \iota\left( V_{L_\lambda}(\psi)\right)$. Note that $\iota$ does not normalize the image of $\Gal(\Qpb / \Qp)$, so the two factors are non-isomorphic; indeed $V_{L_\lambda}(\psi)$ has Hodge--Tate weight $1-k$, while $\iota\left( V_{L_\lambda}(\psi)\right)$ has Hodge--Tate weight 0. Hence we have
  \[ \loc_p(\kato_{\gamma'}) \in H^1_{\Iw}(\QQ_{p,\infty}, V_{L_\lambda}(\psi)) \oplus H^1_{\Iw}(\QQ_{p,\infty}, \iota(V_{L_\lambda}(\psi))).\]
  
  Let us write $\pr_1$ and $\pr_2$ for the projections to the two direct summands above. By Corollary \ref{cor:compat}, the projection $\pr_1 \loc_p(\kato_{\gamma'})$ to $H^1_{\Iw}(\QQ_{p, \infty}, V_{L_\lambda}(\psi))$ is 
  \[ \loc_{\fp}\left(\e_\infty(\gamma) \otimes (\zeta_{p^n})^{\otimes (-1)}\right).\]
  By Proposition \ref{prop:mackey}, we see that the projection of $\loc_p(\kato_{\gamma'})$ to the other direct summand is 
  \[ \delta \cdot \loc_{\fp}\left[\iota_*\left(\e_\infty(\gamma) \otimes (\zeta_{p^n})^{\otimes (-1)}\right)\right] = \left[\delta \cdot \loc_{\fp}\left(\iota_*(\e_\infty(\gamma))\right)\right] \otimes (\zeta_{p^n})^{\otimes (-1)}.\]
  
  We have
  \[ \iota_* \left(\e_{\infty}(\gamma)\right) = \e_{\infty}(\iota \gamma),\]
  so this simplifies to
  \[ \pr_2\left(\loc_p \kato_{\gamma'}\right) = \delta \cdot \left[ \loc_{\fp} \left(\e_\infty(\iota \gamma)\right)\right] \otimes (\zeta_{p^n})^{\otimes(-1)}.\]
  
  \begin{definition}
   Let $L_{p, 1}^\gamma \in \Lambda(\Gamma) \otimes_{\Zp} \Dcris(V_{L_{\lambda}}(\psi)(k-1))$ and $L_{p, 2}^\gamma \in \Lambda(\Gamma) \otimes_{\Zp} \Dcris(\iota V_{L_{\lambda}}(\psi)(k-1))$ be the unique elements such that 
   \[ \mathcal{L}^\Gamma_{V_{L_\lambda}(f)(k-1)}\left(\kato_{\gamma'} \otimes (\zeta_{p^n})^{\otimes(k-1)}\right) = L_{p, 1}^\gamma \oplus L_{p, 2}^\gamma. \]
  \end{definition}

  We shall see below that if $g = \bar f$ is the complex conjugate of $f$, then $L_{p, 1}^\gamma$ will be the ordinary $p$-adic $L$-function of $g$, and $L_{p, 2}^\gamma$ is the critical-slope $p$-adic $L$-function of $g$.
  
  \begin{theorem}\label{thm:values}
   For every character $\eta$ of $\Gamma$, we have
   \[ L_{p, 1}^\gamma(\eta) = \mathbb{L}_{\ff p^\infty}(\eta \left(\psi_{\lambda} \chi^{k-2}\right)^{-1}) \cdot t^{k-1}\gamma,\]
   and
   \[ L_{p, 2}^\gamma(\eta) = \left(\ell_0 \dots \ell_{k-2} \delta \mathbb{L}_{\ff p^\infty}\right) (\eta \left(\psi_{\lambda}^\iota \chi^{k-2}\right)^{-1}) \cdot \iota \gamma.\]
  \end{theorem}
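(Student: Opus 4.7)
The plan is to apply the Perrin--Riou regulator $\mathcal{L}^\Gamma_{V_{L_\lambda}(f)(k-1)}$ to the localized Kato element $\loc_p(\kato_{\gamma'}) \otimes (\zeta_{p^n})^{\otimes(k-1)}$, using the decomposition already derived in the discussion preceding the theorem. By Lemma \ref{lem:katoisomorphisms}(a), $V_{L_\lambda}(f)$ splits as a representation of $\Gal(\Qpb / \Qp)$ into $V_{L_\lambda}(\psi) \oplus \iota V_{L_\lambda}(\psi)$, and the regulator respects this splitting. After the twist by $(\zeta_{p^n})^{\otimes(k-1)}$, the first summand $V_{L_\lambda}(\psi)(k-1)$ has Hodge--Tate weight $0$, while $\iota V_{L_\lambda}(\psi)(k-1)$ has Hodge--Tate weight $k-1$; this dichotomy is the source of the qualitatively different shape of the two formulas.

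For $L_{p,1}^\gamma$ the projection $\pr_1$ was computed above to equal $\loc_\fp(\e_\infty(\gamma)) \otimes (\zeta_{p^n})^{\otimes(k-2)}$. Since the target representation is ordinary, Perrin--Riou's regulator here coincides, up to the canonical identifications of $\Dcris$, with a twist of the Coleman map $\Col^\zeta$ of Proposition \ref{prop:yagercolemanmap}. The identification $\Col^\zeta(\e_{\ff p^\infty}) = \mathbb{L}_{\ff p^\infty}$ combined with the general principle that twisting a cohomology class by a Galois character $\tau$ corresponds on the measure side to the substitution $\eta \mapsto \eta \tau^{-1}$ yields the first formula, with $\tau = \psi_\lambda \chi^{k-2}$. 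The extra scalar $t^{k-1}\gamma$ records the canonical isomorphism $\Dcris(V_{L_\lambda}(\psi)(k-1)) \cong \Dcris(V_{L_\lambda}(\psi)) \cdot t^{-(k-1)}$.

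For $L_{p,2}^\gamma$ the projection $\pr_2$ equals $\delta \cdot \loc_\fp(\e_\infty(\iota\gamma)) \otimes (\zeta_{p^n})^{\otimes(k-2)}$, living in $H^1_{\Iw}(\QQ_{p,\infty}, \iota V_{L_\lambda}(\psi)(k-1))$, which has Hodge--Tate weight $k-1 > 0$. By the twist-compatibility of Perrin--Riou's regulator \cite{perrinriou95}, $\mathcal{L}^\Gamma$ on this summand equals $\ell_0 \ell_1 \cdots \ell_{k-2}$ times a Coleman-type map on the twist-down to Hodge--Tate weight $0$. The presence of $\iota\gamma$ in place of $\gamma$ replaces the twist character $\psi_\lambda$ by its conjugate $\psi_\lambda^\iota$, via the interplay of $\iota$ with the isomorphism \eqref{eq:twistwithW}; and the prefactor $\delta$ on the cohomology class becomes multiplication by $\delta$ on the measure side, via the $\Lambda(G_{\ff p^\infty})$-equivariance of the Coleman map. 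Combining these ingredients yields the claimed second formula.

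The main technical difficulty lies in the critical-slope half: one must verify that Perrin--Riou's regulator for the Hodge--Tate weight $k-1$ summand decomposes as $\ell_0 \cdots \ell_{k-2}$ times a Coleman-type map matching $\Col^\zeta$ at every finite-order character. This requires careful matching of the Gauss sums $\tau(\eta, \zeta)$, Frobenius eigenvalues $\eta(\tilde\vp)^n$, and Bloch--Kato dual exponentials that appear in the regulator's interpolation formula with those in Proposition \ref{prop:yagercolemanmap}. Once this identification is in place, comparing interpolation formulas at each character $\eta$ establishes both equalities.
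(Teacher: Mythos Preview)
Your proposal is correct and follows essentially the same route as the paper's proof: decompose $\loc_p(\kato_{\gamma'})$ via Lemma~\ref{lem:katoisomorphisms}(a), apply the Perrin--Riou regulator to each summand, and use twist-compatibility to reduce to the Coleman map $\Col^\zeta$ and hence to $\mathbb{L}_{\ff p^\infty}$. The paper packages the twist-compatibility as a single identity (for any crystalline $V$ with non-negative Hodge--Tate weights and $\xi=\chi^j\tau$ with $\tau$ unramified, $\mathcal{L}^{G_{\ff p^\infty}}_{V(\xi)}(x\otimes e_\xi)(\eta)=(\ell_0\cdots\ell_{j-1})(\eta)\cdot\mathcal{L}^{G_{\ff p^\infty}}_{V}(x)(\eta\xi^{-1})\otimes t^{-j}e_\xi$) and then applies it uniformly with $V=\Qp$ and $x=\e_{\ff p^\infty}\otimes e_{-1}$, together with the base-case identity $\mathbb{L}_{\ff p^\infty}=\ell_0^{-1}\mathcal{L}^{G_{\ff p^\infty}}_{\Qp(1)}(\e_{\ff p^\infty})$; this sidesteps the direct matching of Gauss sums, Frobenius eigenvalues and dual exponentials that you flag as the main technical difficulty, replacing it with a single appeal to the known compatibility of $\Col^\zeta$ with the regulator for $\Qp(1)$.
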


  \begin{proof}
   For brevity, we shall write $e_j$ for $(\zeta_{p^n})^{\otimes j}$, considered as a basis vector of $\Qp(j)$.

   It is easy to see that if $\xi$ is a character of $G_{\ff p^\infty}$ of the form $\chi^j \tau$, where $\tau$ is unramified and $j \ge 0$, and $V$ is any crystalline representation with non-negative Hodge-Tate weights, then for any $x \in H^1_{\Iw}(K(\ff p^\infty), V)$ and any choice of basis $e_\xi$ of $\Qp(\xi)$ we have
   \[  \mathcal{L}^{G_{\ff p^\infty}}_{V(\xi)}(x \otimes e_\xi)(\eta) = (\ell_0 \dots \ell_{j-1})(\eta) \cdot \mathcal{L}^{G_{\ff p^\infty}}_{V}(x)(\eta \xi^{-1}) \otimes t^{-j} e_{\xi}.\]
   Note that if $\xi$ takes values in the finite extension $L / \Qp$, this is an equality of two elements of $L \otimes \Finf \otimes \Dcris(V(\xi))$: the element $t^{-j} e_{\xi} \in \BB_{\cris} \otimes_{\Qp} L(\xi)$ transforms via $\tau$ under $G_{\Qp}$, and hence lies in $\Finf \otimes \Dcris(L(\xi))$, since the periods of unramified characters lie in $\Finf \subseteq \BB_{\cris}$.

   We apply this result with $V = \Qp$ (the trivial representation), $x = \e_{\ff p^\infty} \otimes e_{-1}$, and various values of $\xi$. Firstly, taking $\xi$ to be the cyclotomic character, we have
   \[ \mathbb{L}_{\ff p^\infty} = \ell_0^{-1} \mathcal{L}^{G_{\ff p^\infty}}_{\Qp(1)}(\e_{\ff p^\infty}),\]
   and thus
   \begin{equation}
    \label{eq:reg1}
    \mathbb{L}_{\ff p^\infty}(\eta) = \mathcal{L}^{G_{\ff p^\infty}}_{\Qp}(\e_{\ff p^\infty} \otimes e_{-1})(\chi^{-1} \eta) \otimes t^{-1} e_1.
   \end{equation}

   On the other hand we have 
   \begin{align*}
    L_{p, 1}^\gamma(\eta) &= \mathcal{L}^{\Gamma}_{V_{L_\lambda}(\psi)(k-1)}\left(\pr_1(\kato_{\gamma'}) \otimes e_{k-1})\right)(\eta) \\
    &= \mathcal{L}^{G_{\ff p^\infty}}_{V_{L_\lambda}(\psi)(k-1)}\left(\e_\infty(\gamma) \otimes e_{k-2}\right)(\eta)
   \end{align*}
   The group $G_{\Qp}$ acts on $V_{L_\lambda}(\psi)(k-1)$ via the unramified character $\chi^{k-1} \psi_{L_\lambda}$, so this is
   \begin{align*}
    L_{p, 1}^\gamma(\eta)=\mathcal{L}^{G_{\ff p^\infty}}_{\Qp}\left(\e_\infty \otimes e_{-1}\right)\left((\chi^{k-1} \psi_{L_\lambda})^{-1}\eta\right) \otimes (t^{k-1} \gamma) \otimes (t^{1-k} e_{k-1}).
   \end{align*}
   Comparing this with \eqref{eq:reg1}, we deduce that
   \[ L_{p, 1}^\gamma(\eta) = \mathbb{L}_{\ff p^\infty}\left((\chi^{k-2} \psi_{L_\lambda})^{-1}\eta\right) \otimes (t^{k-1} \gamma) \otimes (t^{2-k} e_{k-2}).\]
   If we identify $\Dcris(\Qp(k-2))$ with $\Qp$ in the usual way, $t^{2-k} e_{k-2}$ is sent to 1. As remarked above, the element $t^{k-1} \gamma \in \BB_{\cris} \otimes_{\Qp} V_{L_{\lambda}}(\psi)$ lies in $\Finf \otimes_{\Qp} \Dcris(V_{L_\lambda}(\psi))$. So if $\omega$ is a $K$-basis of $S(\psi)$, then the image of $\omega$ under the crystalline comparison isomorphism is a basis of $\DD_{\cris}(V_{L_\lambda}(\psi))$, and if we define $\Omega_p = (\gamma \otimes e_{1-k}) / \omega$, this will lie in $\Finf$ and our result becomes
   \[ L_{p, 1}^\gamma(\eta) = \mathbb{L}_{\ff p^\infty}\left((\chi^{k-2} \psi_{L_\lambda})^{-1}\eta\right) \cdot \Omega_p \omega.\]

   We now turn to $L_{p, 2}^\gamma$. We have
   \begin{align*}
    L_{p, 2}^\gamma(\eta) &= \mathcal{L}^{\Gamma}_{\iota(V_{L_\lambda}(\psi))(k-1)}\left( \pr_2(\kato_{\gamma'}) \otimes e_{k-1}\right)(\eta)\\
    &=  \mathcal{L}^{G_{\ff p^\infty}}_{\iota(V_{L_\lambda}(\psi))(k-1)}\left(\left(\delta \cdot \e_\infty(\iota \gamma)\right) \otimes e_{k-2} \right)(\eta)\\
    &= (-1)^{k-2} \eta(\delta) \mathcal{L}^{G_{\ff p^\infty}}_{\iota(V_{L_\lambda}(\psi))(k-1)}\left(\e_\infty(\iota \gamma) \otimes e_{k-2} \right)(\eta).
   \end{align*}
   The group $G_{\Qp}$ acts on $\iota(V_{L_\lambda}(\psi))$ by the character $\psi_{\lambda}^\iota$, which is unramified; so this is
   \begin{align*}
    L_{p, 2}^\gamma(\eta) &= (-1)^{k-2} \eta(\delta) (\ell_0 \dots \ell_{k-2})(\eta) \cdot  \mathcal{L}^{G_{\ff p^\infty}}_{\Qp}\left(\e_\infty \otimes e_{-1} \right)((\chi^{k-1} \psi_{\lambda}^\iota)^{-1} \eta) \\ &\hspace{3in} \otimes t^{1-k} e_{k-1} \otimes \iota\gamma.\\
    &= (-1)^{k-2} \eta(\delta) (\ell_0 \dots \ell_{k-2})(\eta) \cdot \mathbb{L}_{\ff p^\infty}\left((\chi^{k-2} \psi_{L_\lambda}^\iota)^{-1}\eta\right) \otimes t^{2-k} e_{k-2} \otimes \iota\gamma.
   \end{align*}
  As above, we identify $t^{2-k} e_{k-2} \in \Dcris(\Qp(k-2))$ with $1 \in \Qp$; and if $\omega$ is a basis of $S_L(\psi)$, the image of $\iota \omega$ under the comparison isomorphism is a basis of $\DD_{\cris}(\iota(V_{L_\lambda}(\psi)))$, so if we define $\Omega_p^\iota = (\iota \gamma) / (\iota \omega)$ this becomes
   \[ L_{p, 2}^\gamma(\eta) = (-1)^{k-2} \eta(\delta) (\ell_0 \dots \ell_{k-2})(\eta) \cdot \mathbb{L}_{\ff p^\infty}\left((\chi^{k-2} \psi_{L_\lambda})^{-1}\eta\right) \cdot \Omega_p^\iota \iota\omega.\]
  \end{proof}

  \begin{definition}
   Let $\omega$ be a basis of $S_L(\psi)$ as above, and let $L_{p, \alpha}(g)$ and $L_{p, \beta}(g)$ be the elements of $\calH_{L_\lambda}(\Gamma)$ defined by
   \[ L_{p, 1}^\gamma = L_{p, \alpha}(g) \cdot \omega\]
   and
   \[ L_{p, 2}^\gamma = L_{p, \beta}(g) \cdot \iota\omega.\]
   Then $L_{p,\alpha}$ and $L_{p, \beta}$ are the $p$-adic $L$-functions attached to $g$, where $\alpha$ and $\beta$ are respectively the unit and non-unit roots of the Hecke polynomial of $g$.
  \end{definition}

  As shown in \cite[\S 16]{kato04}, this is consistent with the classical Amice--Velu--Vishik construction of the ordinary $p$-adic $L$-function $L_{p,\alpha}(g)$, and thus it is natural to regard $L_{p, \beta}(g)$ as a candidate for a critical-slope $p$-adic $L$-function. This is the definition of the Kato critical-slope $L$-function used in \cite{loefflerzerbes10}.

  \begin{theorem}\label{thm:maintheorem}
   Up to multiplication by two nonzero scalars, one for each sign, $L_{p, \beta}(g)$ coincides with the modular symbol critical-slope $L$-function $L_{p, \beta}^{\mathrm{MS}}(g)$ attached to the non-ordinary $p$-stabilization of $f$ in \cite{bellaiche11b}.
  \end{theorem}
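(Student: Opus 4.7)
The plan is to deduce Theorem~\ref{thm:maintheorem} from Theorem~\ref{thm:values} by identifying both $L_{p,\beta}(g)$ and $L_{p,\beta}^{\mathrm{MS}}(g)$ with essentially the same expression involving the Katz two-variable $p$-adic $L$-function of $K$. First, I would invoke the Note following the proposition that computes $\mathbb{L}_{\ff p^\infty}$: under the Artin map normalization fixed in \S\ref{sect:gc}, the measure $\mathbb{L}_{\ff p^\infty}$ coincides (up to the noted sign) with the pullback of the Katz two-variable $p$-adic $L$-function attached to $K$ in the sense of Hida--Tilouine. Substituting this into the formula of Theorem~\ref{thm:values} yields an expression
\[ L_{p,\beta}(g) \cdot \iota\omega \;=\; \bigl(\ell_0 \cdots \ell_{k-2}\bigr) \cdot \bigl(\delta \cdot \mathcal{L}^{\mathrm{Katz}}_K\bigr)\bigl(\eta \,(\psi_\lambda^\iota \chi^{k-2})^{-1}\bigr) \cdot \Omega_p^\iota \, \iota\omega, \]
where the scalar $\Omega_p^\iota \in \widehat{F}_\infty^\times$ records the $p$-adic period ratio extracted in the proof of Theorem~\ref{thm:values}.

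Next, I would quote Bella\"iche's main computation in \cite{bellaiche11b}: for the non-ordinary $p$-stabilization of the CM form $f$, the overconvergent modular-symbol critical-slope $p$-adic $L$-function has the same shape, namely
\[ L_{p,\beta}^{\mathrm{MS}}(g) \;=\; \bigl(\ell_0 \cdots \ell_{k-2}\bigr) \cdot \bigl(\delta \cdot \mathcal{L}_K^{\mathrm{Katz}}\bigr)\bigl(\eta \,(\psi_\lambda^\iota \chi^{k-2})^{-1}\bigr) \cdot \Omega_\infty^{\pm}, \]
where $\Omega_\infty^{\pm}$ is the ratio of Shimura's complex periods of $g$ to the $K$-rational differential $\omega$ on each parity component of $\Gamma$. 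The crucial observation is that both formulas are characterised by interpolation at the same finite set of critical characters $\eta = \chi^j \eta_{\mathrm{fin}}$ for $0 \le j \le k-2$, and that the values at these characters agree up to the overall multiplicative constant $\Omega_\infty^{\pm} / \Omega_p^\iota$. Since both distributions lie in the subspace of $\calH_{L_\lambda}(\Gamma)$ of growth order $\le k-1$, this interpolation uniquely determines them, so the equality at critical characters propagates to an identity of distributions on each parity component of $\Gamma$.

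The main obstacle, and the reason for the two scalars in the statement, is the careful bookkeeping of the two period normalizations. The Kato side uses the single $p$-adic period $\Omega_p^\iota$ attached to the choice of basis $\gamma \in V_L(\psi)$ and its companion de Rham class $\iota\omega$, whereas Bella\"iche's side uses two canonical Shimura periods, one for each eigenvalue of complex conjugation on $V_\CC(f)$. There is no intrinsic reason for the two ratios $\Omega_\infty^{+}/\Omega_p^\iota$ and $\Omega_\infty^{-}/\Omega_p^\iota$ to coincide, since the parities interact differently with the action of $\delta$ on $\Gamma$ and with the sign $(-1)^{k-2}\eta(\delta)$ appearing in Theorem~\ref{thm:values}; this is precisely the ambiguity the theorem allows. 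Once these two scalars are pinned down (by evaluating at a single character of each parity, say $\chi^j$ with $j$ even or odd), the rest of the comparison is a formal matching of Euler-factor-free interpolation formulas.
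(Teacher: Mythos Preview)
Your proposal has two substantive problems.

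First, the interpolation-plus-growth argument you sketch is exactly the argument that \emph{fails} at critical slope. A distribution of order $\le k-1$ is \emph{not} uniquely determined by its values at the characters $\chi^j \eta_{\mathrm{fin}}$ with $0 \le j \le k-2$; the kernel of the evaluation map on distributions of this order is precisely the line spanned by $\ell_0 \cdots \ell_{k-2}$ times an arbitrary bounded measure. So you cannot conclude equality of $L_{p,\beta}(g)$ and $L_{p,\beta}^{\mathrm{MS}}(g)$ from agreement at critical characters alone. Fortunately this step is also unnecessary: both sides are given by closed-form expressions in $\calH(\Gamma)$ involving $\mathbb{L}_{\ff p^\infty}$, so one should compare those expressions directly as distributions, not via interpolation.

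Second, and more seriously, when you do compare the two closed-form expressions you will find they are \emph{not} literally the same. Bella\"iche's formula (Theorem~2 of \cite{bellaiche11b}) has the shape $L_{p,\beta}^{\mathrm{MS}}(g)(\eta) = (\ell_0\cdots\ell_{k-2})(\eta)\cdot \mathbb{L}_{\ff p^\infty}(\rho_1 \eta^{-1})$ up to constants${}^\pm$, whereas Theorem~\ref{thm:values} gives $L_{p,\beta}(g)(\eta) = (\ell_0\cdots\ell_{k-2})(\eta)\cdot \mathbb{L}_{\ff p^\infty}(\chi \rho_2^{-1}\eta)$ up to a constant, where $\rho_1,\rho_2$ are the two characters of $\Gal(\overline{K}/K)$ on $V_g^*$. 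The arguments $\rho_1\eta^{-1}$ and $\chi\rho_2^{-1}\eta$ are genuinely different points of the character variety of $G_{\ff p^\infty}$. The bridge between them is the \emph{functional equation} of the Katz two-variable $L$-function \cite[\S II.6]{deshalit87},
\[ \mathbb{L}_{\ff p^\infty}(\iota(\xi)) = C(\xi)\cdot \mathbb{L}_{\ff p^\infty}(\chi\xi^{-1}), \]
together with the relation $\iota(\rho_1)=\rho_2$. The correction factor $C(\xi)$ depends only on the coset of $\xi$ modulo characters factoring through $\Gal(\QQ_\infty^+/\QQ)$, which is exactly why the discrepancy collapses to two scalars, one per sign. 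Your write-up asserts that Bella\"iche's formula already has the argument $(\psi_\lambda^\iota\chi^{k-2})^{-1}\eta$, which is the Kato-side argument; that assertion hides the entire content of the comparison. You need to invoke the functional equation explicitly.
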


  \begin{proof}
   This follows by comparing the formulae of Theorem \ref{thm:values} with Theorem 2 of \cite{bellaiche11b}. Note that Bella\"iche shows that if $\rho_1$ and $\rho_2$ are the two characters by which $\Gal(\overline{K} / K)$ acts on $V_g^*$, then
   \[\begin{cases}
    L_{p, \alpha}(g)(\eta) &= \mathbb{L}_{\ff p^\infty}(\rho_2 \eta^{-1}) \cdot (\text{constant}^\pm),\\
    L_{p, \beta}^{\mathrm{MS}}(g)(\eta) &= (\ell_0 \cdots \ell_{k-2})(\eta) \cdot \mathbb{L}_{\ff p^\infty}(\rho_1 \eta^{-1}) \cdot (\text{constant}^\pm).
   \end{cases}\]
   Here $\text{constant}^\pm$ indicates an equality of distributions on $\Gamma$ up to multiplication by two nonzero constants (one for each sign). On the other hand, we have proved that
   \[\begin{cases}
    L_{p, \alpha}(g)(\eta) &= \mathbb{L}_{\ff p^\infty}(\chi \rho_1^{-1} \eta) \cdot (\text{constant}),\\
    L_{p, \beta}(g)(\eta) &= (\ell_0 \cdots \ell_{k-2})(\eta) \cdot \mathbb{L}_{\ff p^\infty}(\chi \rho_2^{-1} \eta) \cdot (\text{constant}).
   \end{cases}\]
   To reconcile these formulae, we note that the $p$-adic $L$-function $\mathbb{L}_{\ff p^\infty}$ satisfies a functional equation \cite[\S II.6]{deshalit87}
   \[ \mathbb{L}_{\ff p^\infty}(\iota(\eta)) = C(\eta) \cdot \mathbb{L}_{\ff p^\infty}(\chi \eta^{-1}),\]
   for a function $C(\eta)$ (involving a $p$-adic root number and various other correction terms) which depends only on the coset of $\eta$ modulo characters factoring through $\Gal(\QQ_{\infty}^+ / \QQ)$. Since $\iota(\rho_1) = \rho_2$ and vice versa, we deduce that 
   \[ L_{p, \beta}(g) = L_{p, \beta}^{\mathrm{MS}}(g) \cdot (\text{constant}^\pm).\]
   Since the modular symbol $L$-function is only defined up to scalars, this completes the proof.
  \end{proof}
 
  \begin{note}
   Both Kato's and Bella\"iche's critical-slope $p$-adic $L$-functions are only defined up to multiplication by a nonzero constant for characters of each sign; in Kato's construction these constants correspond to the choice of $\gamma$, whose projection to each of the $\pm$ eigenspaces of complex conjugation must be non-zero. It seems natural to ask whether one can choose normalizations for both in a compatible fashion so Theorem \ref{thm:maintheorem} holds exactly, but the present authors do not feel sufficiently familiar with the modular symbol construction to comment further.
  \end{note}

\renewcommand{\MR}[1]{%
  MR \href{http://www.ams.org/mathscinet-getitem?mr=#1}{#1}.
}
\providecommand{\bysame}{\leavevmode\hbox to3em{\hrulefill}\thinspace}
\providecommand{\MRhref}[2]{%
  \href{http://www.ams.org/mathscinet-getitem?mr=#1}{#2}
}

\end{document}